\newcommand{\F}{{\mathbb F}}
\newcommand{\Q}{{\mathbb Q}}
\begin{document}

\newtheorem{theorem}{Theorem}

\newtheorem{corollary}[theorem]{Corollary}
\newtheorem{corol}[theorem]{Corollary}
\newtheorem{conj}[theorem]{Conjecture}
\newtheorem{proposition}[theorem]{Proposition}

\theoremstyle{definition}
\newtheorem{defn}[theorem]{Definition}
\newtheorem{example}[theorem]{Example}

\newtheorem{remarks}[theorem]{Remarks}
\newtheorem{remark}[theorem]{Remark}

\newtheorem{question}[theorem]{Question}
\newtheorem{problem}[theorem]{Problem}

\newtheorem{quest}[theorem]{Question}
\newtheorem{questions}[theorem]{Questions}

\def\toeq{{\stackrel{\sim}{\longrightarrow}}}
\def\into{{\hookrightarrow}}


\def\alp{{\alpha}}  \def\bet{{\beta}} \def\gam{{\gamma}}
 \def\del{{\delta}}
\def\eps{{\varepsilon}}
\def\kap{{\kappa}}                   \def\Chi{\text{X}}
\def\lam{{\lambda}}
 \def\sig{{\sigma}}  \def\vphi{{\varphi}} \def\om{{\omega}}
\def\Gam{{\Gamma}}   \def\Del{{\Delta}}
\def\Sig{{\Sigma}}   \def\Om{{\Omega}}
\def\ups{{\upsilon}}


\def\F{{\mathbb{F}}}
\def\BF{{\mathbb{F}}}
\def\BN{{\mathbb{N}}}
\def\Q{{\mathbb{Q}}}
\def\Ql{{\overline{\Q }_{\ell }}}
\def\CC{{\mathbb{C}}}
\def\R{{\mathbb R}}
\def\V{{\mathbf V}}
\def\D{{\mathbf D}}
\def\BZ{{\mathbb Z}}
\def\K{{\mathbf K}}
\def\XX{\mathbf{X}^*}
\def\xx{\mathbf{X}_*}

\def\AA{\Bbb A}
\def\BA{\mathbb A}
\def\HH{\mathbb H}
\def\PP{\Bbb P}

\def\Gm{{{\mathbb G}_{\textrm{m}}}}
\def\Gmk{{{\mathbb G}_{\textrm m,k}}}
\def\GmL{{\mathbb G_{{\textrm m},L}}}
\def\Ga{{{\mathbb G}_a}}

\def\Fb{{\overline{\F }}}
\def\Kb{{\overline K}}
\def\Yb{{\overline Y}}
\def\Xb{{\overline X}}
\def\Tb{{\overline T}}
\def\Bb{{\overline B}}
\def\Gb{{\bar{G}}}
\def\Ub{{\overline U}}
\def\Vb{{\overline V}}
\def\Hb{{\bar{H}}}
\def\kb{{\bar{k}}}

\def\Th{{\hat T}}
\def\Bh{{\hat B}}
\def\Gh{{\hat G}}


\def\cC{{\mathcal C}}
\def\cU{{\mathcal U}}
\def\cP{{\mathcal P}}
\def\cV{{\mathcal V}}
\def\cS{{\mathcal S}}

\def\CG{\mathcal{G}}

\def\cF{{\mathcal {F}}}

\def\Xt{{\widetilde X}}
\def\Gt{{\widetilde G}}


\def\hh{{\mathfrak h}}
\def\lie{\mathfrak a}

\def\XX{\mathfrak X}
\def\RR{\mathfrak R}
\def\NN{\mathfrak N}

\def\minus{^{-1}}

\def\GL{\textrm{GL}}            \def\Stab{\textrm{Stab}}
\def\Gal{\textrm{Gal}}          \def\Aut{\textrm{Aut\,}}
\def\Lie{\textrm{Lie\,}}        \def\Ext{\textrm{Ext}}
\def\PSL{\textrm{PSL}}          \def\SL{\textrm{SL}} \def\SU{\textrm{SU}}
\def\loc{\textrm{loc}}
\def\coker{\textrm{coker\,}}    \def\Hom{\textrm{Hom}}
\def\im{\textrm{im\,}}           \def\int{\textrm{int}}
\def\inv{\textrm{inv}}           \def\can{\textrm{can}}
\def\id{\textrm{id}}              \def\Char{\textrm{char}}
\def\Cl{\textrm{Cl}}
\def\Sz{\textrm{Sz}}
\def\ad{\textrm{ad\,}}
\def\SU{\textrm{SU}}
\def\Sp{\textrm{Sp}}
\def\PSL{\textrm{PSL}}
\def\PSU{\textrm{PSU}}
\def\rk{\textrm{rk}}
\def\PGL{\textrm{PGL}}
\def\Ker{\textrm{Ker}}
\def\Ob{\textrm{Ob}}
\def\Var{\textrm{Var}}
\def\poSet{\textrm{poSet}}
\def\Al{\textrm{Al}}
\def\Int{\textrm{Int}}
\def\Smg{\textrm{Smg}}
\def\ISmg{\textrm{ISmg}}
\def\Ass{\textrm{Ass}}
\def\Grp{\textrm{Grp}}
\def\Com{\textrm{Com}}
\def\rank{\textrm{rank}}

\def\char{\textrm{char}}

\def\wid{\textrm{wd}}

\newcommand{\Or}{\operatorname{O}}

\def\tors{_\def{\textrm{tors}}}      \def\tor{^{\textrm{tor}}}
\def\red{^{\textrm{red}}}         \def\nt{^{\textrm{ssu}}}

\def\sss{^{\textrm{ss}}}          \def\uu{^{\textrm{u}}}
\def\mm{^{\textrm{m}}}
\def\tm{^\times}                  \def\mult{^{\textrm{mult}}}

\def\uss{^{\textrm{ssu}}}         \def\ssu{^{\textrm{ssu}}}
\def\comp{_{\textrm{c}}}
\def\ab{_{\textrm{ab}}}

\def\et{_{\textrm{\'et}}}
\def\nr{_{\textrm{nr}}}

\def\nil{_{\textrm{nil}}}
\def\sol{_{\textrm{sol}}}
\def\End{\textrm{End\,}}

\def\til{\;\widetilde{}\;}

\def\min{{}^{-1}}

\def\AGL{{\mathbb G\mathbb L}}
\def\ASL{{\mathbb S\mathbb L}}
\def\ASU{{\mathbb S\mathbb U}}
\def\AU{{\mathbb U}}


\title[Word maps  in Kac-Moody setting] {Word maps  in Kac-Moody setting}

\author[Klimenko, Kunyavski\u\i , Morita, Plotkin]{Elena Klimenko,
Boris Kunyavski\u\i , Jun Morita, Eugene Plotkin}

\address{Klimenko:  Mathematisches Institut,
 Heinrich-Heine-Universit\"at,
 Universit\"atsstr. 1,
 40225 D\"usseldorf,
 GERMANY}
\email{klimenko@hhu.de}

\address{Kunyavskii: Department of Mathematics,
Bar-Ilan University, 5290002 Ramat Gan, ISRAEL}
\email{kunyav@macs.biu.ac.il}

\address{Morita: Institute of Mathematics,
University of Tsukuba, 1-1-1 Tennodai,
Tsukuba, Ibaraki 305-8571, JAPAN}
\email{morita@math.tsukuba.ac.jp}

\address{Plotkin: Department of Mathematics,
Bar-Ilan University, 5290002 Ramat Gan, ISRAEL}
\email{plotkin@macs.biu.ac.il}

\begin{abstract}

The paper is a short survey of recent developments in the area of
word maps evaluated on groups and algebras. It is aimed to pose
questions relevant to Kac--Moody theory.
\end{abstract}

\maketitle





\epigraph{{\it Yukimi}\\
\smallskip
Iza yukan\\
Yukimi ni korobu\\
Tokoro made.\\
\bigskip

{\it Snow-viewing}\\
\smallskip
Well, let's go snow viewing,\\
Till we tumble over.

 {\it
Matsuo Bash\={o}, 1687}}

Keywords: word map, simple group, Kac--Moody algebra, Kac--Moody
group, algebraic group, Engel words.

\medskip

\section{Word maps} \label{wordmaps}
These notes are devoted to problems arising from the general philosophy
of word maps. They consist of two sections. In the first one, we
describe some recent results and problems related to word maps on
simple algebraic groups and finite-dimensional Lie algebras. The
objective of the second section is to bring attention to Kac--Moody
groups and algebras, look at them from the viewpoint of word maps,
and formulate some problems.

The general setting is as follows. Let $\Theta$ be a variety of
algebras, $H$ be an algebra in $\Theta$, $W(X)$  be a free finitely
generated algebra in $\Theta$ with generators $x_1, \dots, x_n$. Fix
$w=w(x_1,x_2,\ldots,x_n)$ and consider the word map
\begin{equation} \label{wordmap}
w\colon H^n\to H.
\end{equation}
The map $w$ is the evaluation map: one substitutes $n$-tuples of
elements of the algebra $H$ instead of the variables and computes
the value by performing all algebra operations. Varying $\Theta$ we
arrive at problems on word maps specialized in a fixed $\Theta$.
Here we restrict ourselves to considering the varieties of groups
and Lie algebras.

The surjectivity of the word map for a given  word $w\in W(X)$ and an
algebra $H\in \Theta$ is traditionally the central question of the
theory. In other words,  the question is whether the equation
\begin{equation} \label{wordmapsur}
w(x_1,\ldots, x_n)=h
\end{equation}
has a solution for every $h\in H$. However, for many pairs of $w$
and $H$ the phenomenon of surjectivity is just a dream. So the
question about the width of $H$ with respect to a given $w$ (see the
definition below) often sounds more relevant.

Denote by $w(H)$ the value set of the map $w$ in the algebra $H$.
That is, an element $h\in H$ belongs to $w(H)$ if there exist
elements $h_1,h_2,\ldots, h_n$ of $H$ such that $w(h_1,h_2,\ldots,
h_n)=h$. We will be interested in estimating the size of  $w(H)$. A
reasonable but simpler problem is to compute the span
$\left<w(H)\right>$ of $w(H)$, where span means the verbal subgroup
generated by $w(H)$ in the group case and the linear span of $w(H)$
in the case of Lie algebras.
\medskip

Now assume that $\Theta$ is the variety of all groups. As mentioned above,
the surjectivity property of a word map is too strong for most pairs
$(w,H)$. Denote by $w(H)^k$ the set of elements $g\in H$ of the form
$g=h_1h_2\cdots h_k$ where $h_i\in w(H)$, $i=1,\ldots,k$.

The smallest $k$ such that $w(H)^k=H$ is called the $w$-width of the
group $H$. Denote it by $wd_w(H)$. If $H$ does not have finite
$w$-width, it is said to be of infinite $w$-width.

We want to estimate how much freedom we have in the general setting
of word maps for groups, and to choose a reasonable group $H$ where
word maps are evaluated. Let us look at two extreme classes of the
variety of all groups: free groups and simple groups. The essence of
word maps evaluated at these two poles is quite different.

\subsection{Free groups} Let $F_n(X)$, $X=\{x_1,\ldots,x_n\}$, be the
non-abelian free group with $n$ generators. If we take $w$ to be an
arbitrary word in $F_n(X)$, then common sense suggests that
$wd_w(F_n(X))$ should be infinite. An important theorem of
A.~Rhemtulla \cite{Rh} (see also \cite{Se}) confirms that this is
indeed the case for all non-universal words, see \cite{Se} for
details. Moreover, recently A.~Myasnikov and A.~Nikolaev proved the
following.

\begin{theorem} \cite{MN} \label{th:MN}
Let $H$ be a non-elementary hyperbolic group. Then the width
$wd_w(H)$ is infinite for each non-universal word $w$.
\end{theorem}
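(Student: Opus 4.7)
The plan is to rule out bounded $w$-width by exhibiting, for arbitrarily large $N$, elements of $H$ that cannot be written as a product of $N$ values of $w$. The natural obstruction on a non-elementary hyperbolic group comes from homogeneous quasi-morphisms: if $\phi\colon H\to\R$ is a homogeneous quasi-morphism with defect $D(\phi)$, then any $g=h_1h_2\cdots h_k$ with $h_i\in w(H)$ satisfies
\begin{equation*}
|\phi(g)|\leq k\sup_{h\in w(H)}|\phi(h)|+(k-1)D(\phi).
\end{equation*}
Hence it is enough to produce a quasi-morphism $\phi$ on $H$ that is \emph{unbounded} on $H$ but \emph{uniformly bounded} on $w(H)$.

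For the first half (unboundedness) I would invoke the Bestvina--Fujiwara theorem: a non-elementary hyperbolic group carries an infinite-dimensional space of non-trivial homogeneous quasi-morphisms, realised explicitly as Brooks-type counting functions $\phi_v$ associated to a suitable loxodromic test word $v$. By varying $v$ one has a huge supply of candidates $\phi$ that are unbounded on $\langle v\rangle$, and hence on $H$.

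The core of the argument is then to choose $v$, depending on the non-universal word $w$, so that every value $h\in w(H)$ has $|\phi_v(h)|$ bounded by a constant $M=M(w,v)$ independent of $h$. The strategy is to exploit the fact that, because $H$ is non-elementary hyperbolic, it contains many malnormal quasi-convex non-abelian free subgroups $F$; by Rhemtulla's theorem (as strengthened in \cite{Se}) the word $w$ has infinite $w$-width on each such $F$, and a careful analysis of geodesic words shows that the ``$v$-complexity'' of an element of $w(F)$ is bounded in terms of $w$ and $v$ alone. Using the Morse lemma for quasi-geodesics one transfers this bound from $F$ to $H$: any $w$-value in $H$ can be approximated, up to bounded error along its axis, by a product involving $w$-values in $F$, and the counting quasi-morphism $\phi_v$ is insensitive to bounded perturbations along geodesic representatives.

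The main obstacle is the last transfer step, i.e.\ proving that the values of $w$ on $H$ inherit the combinatorial tameness of $w$-values on a free subgroup after passing through the hyperbolic geometry. Concretely one must control how a $w$-value in $H$ can ``write'' a long power of $v$ into its normal form; this requires picking $v$ generically (small cancellation relative to the possible cancellation patterns arising from $w$) and handling torsion elements and finite-order conjugations that may appear in $H$ but are absent from $F$. Once $\phi_v$ is shown to be bounded on $w(H)$ and unbounded on $H$, the displayed inequality produces, for each $k$, an element outside $w(H)^k$, proving $wd_w(H)=\infty$.
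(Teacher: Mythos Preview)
First, note that the paper does not itself prove Theorem~\ref{th:MN}; the result is simply quoted from \cite{MN} as part of the survey, so there is no in-paper argument to compare your proposal against directly.

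Your quasi-morphism strategy has a genuine gap: it cannot treat all non-universal words. For $w\in[F_n,F_n]$ it actually works, and without any ``transfer step'': each $w$-value is a product of a number of commutators bounded in terms of $w$ alone, so every homogeneous quasi-morphism $\phi$ automatically satisfies $|\phi(h)|\le c\,D(\phi)$ for all $h\in w(H)$, while Bestvina--Fujiwara supplies a $\phi$ unbounded on $H$. But for words outside $[F_n,F_n]$ the method fails in principle. Take the power word $w=x^m$ with $m\ge 2$. If a quasi-morphism $\phi$ satisfies $|\phi(g^m)|\le M$ for all $g\in H$, then
\[
|m\,\phi(g)|\le|\phi(g^m)|+(m-1)D(\phi)\le M+(m-1)D(\phi),
\]
so $\phi$ is bounded on all of $H$ and your displayed inequality yields no obstruction whatsoever. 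Concretely, in a free subgroup $F=\langle a,b\rangle$ the square $((ab)^N)^2=(ab)^{2N}$ contains $2N$ copies of $ab$, so no Brooks counting function $\phi_v$ is bounded on $w(F)$; the asserted uniform bound on the ``$v$-complexity'' of elements of $w(F)$ is simply false for such $w$, and the transfer step cannot repair it. Since there exist perfect non-elementary hyperbolic groups (e.g.\ fundamental groups of hyperbolic integral homology $3$-spheres), one cannot evade the issue by arguing that $\langle w(H)\rangle$ is a proper subgroup of $H$. The argument in \cite{MN} proceeds differently: rather than using quasi-morphisms, it adapts Rhemtulla's combinatorial cancellation method directly to the hyperbolic setting, exploiting the geometry of $H$ to control cancellation in long products of $w$-values.
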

\noindent Since a ``random'' group is hyperbolic \cite{Ol}, for
``generic'' infinite groups the width of any non-universal word is
infinite. This result, though disappointing from the viewpoint of
word maps, is compensated by a tremendous theory of solutions of
equations in free and hyperbolic (non-elementary) groups. For
instance, A.~Mal'cev \cite{Mal} described the set of solutions of
the equation $[x,y]=[a,b]$, where $a$, $b$ are generators of the
free group and $[x,y]=xyx^{-1}y^{-1}$. L.~Comerford--C.~Edmunds
\cite{CE} and R.~Grigochuk--P.~Kurchanov \cite{GrK1}, \cite{GrK2}
described all solutions of quadratic equations in free groups.
Finally, G.~Makanin and A.~Razborov described solutions of an
arbitrary system of equations over a free group \cite{Mak},
\cite{Raz}. This theory leads to a developed geometry over free
groups and, in its turn, to solution of the famous Tarski problems
on elementary theory of free groups, see \cite{KhM1}--\cite{KhM6},
\cite{Se1}--\cite{Se9}, \cite{JS}, \cite{FGMRS}, etc.

The theory mentioned above goes far beyond the scope of these notes.
We will now turn to another pole of the variety of groups and
consider simple groups.

\subsection{Algebraic groups}
First of all, from the set of all words we choose the following representatives:

{\it Power map}:
\begin{equation} \label{1}
 w(x)=x^k.
\end{equation}

 {\it Commutator map}:
  \begin{equation} \label{2}
 w(x,y)=[x,y]=xyx^{-1}y^{-1}.
\end{equation}

 {\it Engel maps}: $w(x,y)=e_n(x,y).$
\begin{equation} \label{3}
  e_1(x,y)=[x,y],\dots, e_{n+1}(x,y)=[e_n,y]=[x,y,\ldots,y].
\end{equation}

{\it Quasi-Engel maps}: $w(x,y)=v_n(x,y).$
 \begin{equation} \label{4}
 v_1(x,y)=x^{-2}y^{-1}x,\dots,
v_{n+1}(x,y)=[xv_n(x,y)x^{-1},yv_n(x,y)y^{-1}].
\end{equation}

{\it Quasi-Engel maps}: $w(x,y)=s_n(x,y)$.
\begin{equation} \label{4'}
s_1(x,y)=x,\dots, s_{n+1}(x,y)=[y^{-1}s_n(x,y)y,s_n(x,y)^{-1}].
\end{equation}
Engel maps are related to nil elements and nilpotency, quasi-Engel
maps do the same with respect to solvability \cite{Z},
\cite{BGGKPP}, \cite{BWW}, \cite{GKP}.

\medskip

We start with a particular problem, which seems to be, at the moment,
the most challenging and tempting among the problems on word maps
for semisimple algebraic groups.

\begin{conj}\label{conj:1}
Let $G=PSL_2(\mathbb C)$, and let $w=w(x,y)$ be an arbitrary
non-identity word in $F(x,y)$. Then the word map $w\colon
PSL_2(\mathbb C)\times PSL_2(\mathbb C)\to PSL_2(\mathbb C)$ is
surjective. In other words, the equation
$$
w(x_1,x_2)=a
$$
has a solution for every $a\in PSL_2(\mathbb C)$.
\end{conj}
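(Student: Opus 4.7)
The plan is to reduce surjectivity of $w\colon PSL_2(\mathbb C)^2\to PSL_2(\mathbb C)$ to separate questions about traces and unipotent elements. First I lift the word map through the two-to-one cover $SL_2\to PSL_2$ to $\tilde w\colon SL_2(\mathbb C)^2\to SL_2(\mathbb C)$. Since $\tilde w$ is equivariant under simultaneous conjugation, its image is a union of $SL_2$-conjugacy classes, and in $SL_2(\mathbb C)$ every non-central semisimple class is determined by its trace $t\in\mathbb C\setminus\{\pm 2\}$. So it suffices to analyse the trace function $\tau:=\tr\circ\tilde w\colon SL_2^2\to\mathbb A^1$ and then to deal separately with the cases $t=\pm 2$.

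The generic part goes through classical Fricke coordinates. For any word $w\in F(x,y)$ there is a polynomial $P_w\in\mathbb C[u,v,\xi]$ with $\tau(A,B)=P_w(\tr A,\tr B,\tr(AB))$, and the map $(A,B)\mapsto(\tr A,\tr B,\tr(AB))$ from $SL_2^2$ to $\mathbb A^3$ is surjective. By Borel's theorem on dominance of word maps on connected semisimple algebraic groups, the image of $\tilde w$ is Zariski dense in $SL_2$, so $\tau$ is non-constant and $P_w$ is a non-constant polynomial in three complex variables. Any such polynomial is surjective onto $\mathbb C$ (fix two variables at generic values and apply the fundamental theorem of algebra to the third), so $\tau\colon SL_2^2\to\mathbb A^1$ is onto. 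This places every regular semisimple conjugacy class of $PSL_2(\mathbb C)$ in the image of $w$.

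It remains to treat the identity and the unique non-trivial unipotent class of $PSL_2(\mathbb C)$. The identity is hit at $(A,B)=(I,I)$. The unipotent class is the genuinely hard point. The strategy is that $\tau^{-1}(\pm 2)\subset SL_2^2$ is a hypersurface of dimension $5$, while the relation locus $\tilde w^{-1}(\{\pm I\})$ is a proper closed subvariety (a non-trivial word in $F(x,y)$ cannot be satisfied identically on $SL_2(\mathbb C)$, since the latter contains non-abelian free subgroups). If one can establish the strict inclusion $\tilde w^{-1}(\{\pm I\})\subsetneq\tau^{-1}(\{\pm 2\})$, then some pair $(A,B)$ in the latter satisfies $\tr\tilde w(A,B)=\pm 2$ and $\tilde w(A,B)\neq\pm I$; the only possibility is that $\tilde w(A,B)$ is a non-central unipotent, which projects to the required $PSL_2$-class.

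The main obstacle is establishing this strict inclusion in a word-independent way. Already for $w=x^n$ the fiber $\tilde w^{-1}(I)$ has dimension $5$, so the bound does not follow from Borel alone. A plausible route is to select, for each $w$, a point $(A_0,B_0)$ with $\tilde w(A_0,B_0)=I$, deform it along a one-parameter family $(A(s),B(s))$, compute the first non-vanishing derivative of $\tilde w(A(s),B(s))$ in $\mathfrak{sl}_2$, and arrange for it to lie in a non-zero nilpotent direction, which forces $\tilde w(A(s),B(s))$ to be a genuine unipotent for small $s\neq 0$. Alternatively, one exploits the presence of non-abelian free subgroups in $SL_2(\mathbb C)$ together with Baire/Bertini-type density arguments to conclude that $\tilde w$ cannot map an open subset of $\tau^{-1}(\pm 2)$ into the two-point set $\{\pm I\}$. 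Packaging such an argument uniformly in $w$ is precisely what the conjecture demands, which is why at present surjectivity is known only for specific families --- power words, commutators, Engel and quasi-Engel words --- that admit case-by-case verification.
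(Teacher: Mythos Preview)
The statement you are attempting to prove is labelled \emph{Conjecture} in the paper, and the paper states explicitly that it is ``widely open''. There is no proof in the paper to compare against; what the paper offers instead is a discussion of partial results and of where the difficulty lies. Your proposal is therefore not a proof but an outline of the problem's structure, and you yourself acknowledge this in the final paragraph.

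That said, your analysis is accurate and matches the paper's own account of the state of affairs. The reduction to traces via Fricke coordinates, together with Borel's dominance theorem to guarantee that $P_w$ is non-constant, does indeed show that every regular semisimple class of $PSL_2(\mathbb C)$ lies in the image of $w$. The paper confirms this: immediately after Theorem~\ref{ban:1} it remarks that surjectivity on semisimple elements ``can be derived by computations of trace polynomials'' and that the proof of Bandman's theorem ``is reduced to verification of the surjectivity of $w(x,y)$ on the unipotent elements''. So the unipotent class is precisely the known obstruction, and the paper goes on to say that the key to Conjecture~\ref{conj:1} ``lies in behaviour of words $w$ sitting in $F^{(2)}$ or deeper''.

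The genuine gap is the one you name yourself: you have no uniform argument for the strict inclusion $\tilde w^{-1}(\{\pm I\})\subsetneq\tau^{-1}(\{\pm 2\})$, and the two heuristics you sketch (a deformation argument, or a Baire/Bertini density argument) are not developed into proofs. Your observation that already for $w=x^n$ the fibre $\tilde w^{-1}(I)$ has dimension~$5$ is a useful caution that a naive dimension count cannot work. In short, your write-up is a correct diagnosis of why the conjecture is hard, not a resolution of it, and the paper's surrounding discussion (Proposition~\ref{prop:2}, Theorem~\ref{ban:1}, and the examples from $F^{(2)}$ treated in \cite{Ba1}) records exactly the case-by-case progress you allude to at the end.
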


Although this conjecture is widely open, there are several partial
results. First of all, note that the power map $w=x^n$ is surjective
on $PSL_2(\mathbb C)$, since all roots are extractable in this group.
The next class of surjective words is given by

\begin{proposition}\label{prop:2}
Suppose that a word
$w(x,y)=x^{i_1}y^{j_1}x^{i_2}y^{j_2}\dots x^{i_k}y^{j_k}$
is not an identity of the infinite dihedral group $D_\infty$. Then
the word map $w(x,y)$ is surjective on $PSL_2(\mathbb C)$.
\end{proposition}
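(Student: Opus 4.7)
The plan is to reduce surjectivity of $w$ on $\PSL_2(\CC)$ to surjectivity of an associated trace polynomial, and then to dispose of the exceptional (unipotent) fibre by a dimension count. I would lift $w$ to a morphism $\SL_2(\CC)^2 \to \SL_2(\CC)$ and invoke the Fricke--Vogt theorem, which presents $\tr(w(X,Y))$ as an integer polynomial $P_w(a,b,c)$ in the three invariants $a = \tr(X)$, $b = \tr(Y)$, $c = \tr(XY)$, the invariant map $(a,b,c)\colon \SL_2(\CC)^2 \to \CC^3$ being surjective. Since non-central conjugacy classes of $\PSL_2(\CC)$ are distinguished by the trace of any $\SL_2$-lift, except that at trace $\pm 2$ one has the identity together with a single non-trivial unipotent class, it suffices to prove that (i) $P_w$ is non-constant and (ii) the image of $w$ contains $I$ and a non-trivial unipotent element.

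For (i) I would split on the exponent sums $\sigma_x := \sum_\ell i_\ell$ and $\sigma_y := \sum_\ell j_\ell$. If $\sigma_x \neq 0$, specialising $Y = I$ gives $w(X, I) = X^{\sigma_x}$, whose trace is the Chebyshev polynomial $2 T_{|\sigma_x|}(a/2)$, non-constant in $a$; the case $\sigma_y \neq 0$ is symmetric. If $\sigma_x = \sigma_y = 0$, then $w \in [F_2, F_2]$. I would use the embedding $D_\infty \hookrightarrow \PSL_2(\CC)$ sending $a \mapsto \di(t, t^{-1})$, $s \mapsto J$, with $t \in \CC^*$ a parameter and $J$ the antidiagonal involution. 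The hypothesis that $w$ is not an identity of $D_\infty$ supplies elements $g_0, h_0 \in D_\infty$ with $w(g_0, h_0) \neq 1$; since $w \in [F_2, F_2]$ and $D_\infty$ is metabelian with $[D_\infty, D_\infty] = \langle a^2 \rangle$, we have $w(g_0, h_0) = a^{2M}$ for some nonzero integer $M$. Substituting the $t$-dependent images $G_0(t), H_0(t) \in \PSL_2(\CC)$ of $g_0, h_0$ into $w$ yields $\di(t^{2M}, t^{-2M})$, with trace $t^{2M} + t^{-2M}$, manifestly non-constant in $t$. In every case $P_w$ is non-constant.

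For (ii), the identity is trivially hit by $w(I, I) = I$. Non-constancy of $P_w$ forces $\tr \circ w$ to be surjective onto $\CC$, so every semisimple conjugacy class of $\PSL_2(\CC)$ lies in the image and $w$ is dominant. Consider the five-dimensional subvariety $V := \{(X, Y) \in \SL_2(\CC)^2 : \tr(w(X, Y)) = 2\}$; its image $w(V)$ is a constructible, conjugation-stable subset of $\{I\} \cup U_+$, where $U_+$ is the non-trivial unipotent class of $\SL_2(\CC)$. The only possibilities are $\{I\}$, $U_+$, or their union, and $w(V) = \{I\}$ would force $\dim w^{-1}(I) \geq 5$ and thus $\dim w(\SL_2(\CC)^2) \leq 1$, contradicting dominance. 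Hence $U_+$ lies in the image of $w$, so the non-trivial unipotent class of $\PSL_2(\CC)$ is hit as well.

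The main obstacle is the commutator case of (i): turning the qualitative hypothesis that $w$ acts non-trivially on $D_\infty$ into an explicit non-constant trace polynomial. The decisive input is the metabelian structure of $D_\infty$, which confines every dihedral substitution of $w$ to the cyclic subgroup $\langle a^2 \rangle$ and thereby makes the non-triviality of $w$ on $D_\infty$ visible as a non-constant function of the torus parameter in the ambient $\PSL_2(\CC)$.
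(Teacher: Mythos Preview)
Your route via Fricke--Vogt trace polynomials is genuinely different from the paper's. The paper argues directly: if the total exponent sum $\sum i_m+\sum j_m$ is nonzero, set $x=y$ and use that power maps are surjective on $\PSL_2(\CC)$; otherwise substitute two involutions $a,b\in\PSL_2(\CC)$, so that $w(a,b)=(ab)^\ell$ in the dihedral group they generate, with $\ell\ne 0$ by the hypothesis on $D_\infty$, and then solve $(ab)^\ell=c$ for an arbitrary target $c$ using that every element of $\PSL_2(\CC)$ is a product of two involutions and that $\ell$th roots always exist. This disposes of every $c$, unipotent or not, in one stroke, with no trace machinery and no dimension count.

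Your reduction to (i) and (ii) is correct, and your argument for (i) is sound; the dihedral embedding in the commutator case is a nice way to witness non-constancy of $P_w$. But (ii) has a real gap. From $w(V)=\{I\}$ you correctly get $\dim w^{-1}(I)\ge 5$, yet the step ``thus $\dim w(\SL_2(\CC)^2)\le 1$'' is unjustified: a dominant morphism of irreducible varieties may have an isolated fibre of codimension one in the source while remaining dominant. For instance $f\colon\A^n\to\A^n$, $f(x_1,\dots,x_n)=(x_1,x_1x_2,\dots,x_1x_n)$, is dominant with $f^{-1}(0)=\{x_1=0\}$ of dimension $n-1$. Upper semicontinuity of fibre dimension constrains only the \emph{generic} fibre, so a single $5$-dimensional fibre over $I$ does not contradict dominance. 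The unipotent class is precisely the delicate point here (compare the separate treatments in Theorems~\ref{ban:1} and~\ref{th:q}), and your dimension count does not reach it. The quickest repair is to borrow the paper's involution substitution just for the unipotent target; alternatively one must supply a genuine argument excluding $w(V)=\{I\}$, which your current text does not do.
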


\begin{proof}
We can assume that $(i_1+\cdots +i_k)+(j_1+\cdots +j_k)=0$.
Otherwise, setting $x=y$, we arrive at the power word $x^m$. The
equation $x^m=c$ is solvable, since in $PSL_2(\mathbb C)$ one can
extract roots of an arbitrary degree. So, assuming the condition
above, we need to solve the equation $w(x,y)=c$ in  $PSL_2(\mathbb
C)$. Let us plug two involutions $a$, $b$ of $PSL_2(\mathbb C)$
into the word $w$. Since $a^{-1}=a$, $b^{-1}=b$, we have
$w(a,b)=(ab)^\ell$, $1\le\ell\le k$, or $w(a,b)=(ba)^\ell$, $1\le\ell\le k-1$. Note that
$ab\neq 1$ because $w$ is not an identity in $D_\infty$. Extracting
roots, we arrive at a system of equations:
\begin{equation} \label{ww}
a^2=1,\quad b^2=1,\quad ab=c',
\end{equation}
where  $c'$ is a prescribed element of $PSL_2(\mathbb C)$
(explicitly, $c'$ is either an $\ell$th root of $c$ or an $\ell$th
root of $bcb$). It remains to notice that in $PSL_2(\mathbb C)$
every element is a product of two involutions \cite{Ber}.
\end{proof}

\begin{corollary} All $n$-Engel maps are surjective on $PSL_2(\mathbb C)$.
\end{corollary}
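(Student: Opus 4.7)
The plan is to reduce the corollary directly to Proposition~\ref{prop:2}: it suffices to check that each $n$-Engel word $e_n(x,y)$ can be written in the alternating form $x^{i_1}y^{j_1}\cdots x^{i_k}y^{j_k}$ required by the proposition, and that it is not an identity of the infinite dihedral group $D_\infty$.

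The first point is automatic, since any element of $F(x,y)$, after free reduction and collection of consecutive same-letter factors, is of alternating form (allowing $i_1=0$ or $j_k=0$ at the ends). So the content is to verify that $e_n$ is not a law of $D_\infty$.

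For this I would realize $D_\infty=\langle a,b\mid a^2=b^2=1\rangle$ and set $r=ab$, an element of infinite order satisfying $brb=r^{-1}$. Evaluating $e_n$ at $x=a$, $y=b$, I would prove by induction on $n$ that $e_n(a,b)=r^{2^n}$. The base case is $e_1(a,b)=[a,b]=abab=r^2$, using $a^{-1}=a$ and $b^{-1}=b$. For the inductive step,
\[
e_{n+1}(a,b)=[r^{2^n},b]=r^{2^n}\cdot b\,r^{-2^n}b=r^{2^n}\cdot(brb)^{-2^n}=r^{2^n}\cdot r^{2^n}=r^{2^{n+1}},
\]
which is non-trivial because $r$ has infinite order. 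Proposition~\ref{prop:2} then yields surjectivity of $e_n$ on $PSL_2(\mathbb{C})$.

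There is essentially no obstacle: all structural work is carried out by Proposition~\ref{prop:2}, and only the one-line inductive computation in $D_\infty$ remains. The only real ``choice'' is to test $e_n$ on two involutions, which is natural because their product generates the infinite cyclic translation subgroup of $D_\infty$, inside which non-triviality is immediate.
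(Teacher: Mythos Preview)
Your proof is correct and follows essentially the same route as the paper: both compute $e_n(a,b)=(ab)^{2^n}$ for two involutions by the identical one-line induction, and then invoke Proposition~\ref{prop:2}. The only cosmetic difference is that you frame the computation explicitly in $D_\infty$ (with $r=ab$) to verify the hypothesis of the proposition, whereas the paper phrases it with involutions $a,b\in PSL_2(\mathbb C)$; the calculation is formally the same in either setting.
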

\begin{proof}
Let $a,b\in PSL_2(\mathbb C)$ be involutions.
We have $e_1(a,b)=aba^{-1}b^{-1}=abab=(ab)^2$.
Show that $e_n(a,b)=(ab)^{2^n}$.  By induction,
$$
e_n(a,b)=[e_{n-1}(a,b),b]=[(ab)^{2^{n-1}},b]=(ab)^{2^{n-1}}b (ba)^{2^{n-1}}b=
$$
$$
=(ab)^{2^{n-1}}(ab)^{2^{n-1}}=(ab)^{2^n}.
$$
\end{proof}

Among other interesting word maps which fall under the conditions of
Proposition  \ref{prop:2} is the family $w_1(x,y)=[x,y]$,
$w_{n+1}(x,y)=[w_n(x,y),\delta_n(x,y)]$, where $\delta_n(x,y)=y$ if
$n=2k$ or $\delta_n(x,y)=x$ if $n=2k+1$. Moreover, T.~Bandman proved

\begin{theorem} \cite{Ba1}\label{ban:1}
The word map defined by  $w(x,y) \in F^{(1)} \setminus F^{(2)}$ is surjective on
$PSL_2(\mathbb C)$.
\end{theorem}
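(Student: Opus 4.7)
The plan is to combine the Fricke--Vogt trace parametrization of $SL_2(\mathbb C)$-representations of $F_2 = F(x,y)$ with an evaluation on the upper-triangular Borel subgroup that exploits the hypothesis $w \notin F^{(2)}$. It is enough to work in $SL_2(\mathbb C)$: the claim becomes that every $c \in SL_2(\mathbb C)$ has either $c$ or $-c$ in the image of $w \colon SL_2(\mathbb C)^2 \to SL_2(\mathbb C)$. Recall that a non-central conjugacy class in $SL_2(\mathbb C)$ is pinned down by its trace except at trace $\pm 2$, where the fiber splits into $\{\pm I\}$ and the unique unipotent class; in $PSL_2(\mathbb C)$ only the identity and the unipotent class survive this ambiguity.

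The first substep exhibits unipotent elements in the image. Let $B \subset SL_2(\mathbb C)$ be the upper-triangular Borel and $U$ its unipotent radical. Then $B$ is metabelian, i.e.\ $B^{(2)} = 1$, so every $w' \in F^{(2)}$ evaluates to $I$ on $B \times B$. Because $w \notin F^{(2)}$, the Magnus--Fox calculus supplies $A_1, A_2 \in B$ with $w(A_1, A_2) \in U \setminus \{I\}$: concretely, the upper-right entry of $w(A_1, A_2)$ is a specialization of the abelianized Fox derivatives of $w$, which are non-zero in $\mathbb Z[F^{\mathrm{ab}}]$ precisely because the image of $w$ in $F^{(1)}/F^{(2)}$ is non-trivial. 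The second substep establishes non-constancy of the trace polynomial. By Fricke--Vogt, $\tr w(A_1, A_2) = P_w(\alpha, \beta, \gamma)$ is a polynomial in the character variety coordinates $\alpha = \tr A_1$, $\beta = \tr A_2$, $\gamma = \tr(A_1 A_2)$. Were $P_w$ identically $2$, Cayley--Hamilton would force $(w(A_1,A_2) - I)^2 = 0$ everywhere, so the image of $w$ would lie in the two-dimensional nilpotent cone; but a one-parameter deformation of the pair from the previous substep that exits $B$ gives, via the Leibniz rule, a first-order trace variation assembled from the same non-vanishing Fox derivatives, contradicting the collapse. Hence $P_w$ is non-constant, and a non-constant polynomial $\mathbb C^3 \to \mathbb C$ is surjective.

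To assemble the proof, take any $c \in PSL_2(\mathbb C)$ with lift $\tilde c \in SL_2(\mathbb C)$ of trace $t$: if $t \neq \pm 2$, surjectivity of $P_w$ yields $(A_1, A_2)$ with $\tr w(A_1, A_2) = t$, whence, since the trace determines the conjugacy class in this range, $w(A_1, A_2)$ is a conjugate of $\tilde c$; if $c$ is the identity, take $A_1 = A_2 = I$ and invoke $w \in F^{(1)}$; if $c$ is the unipotent class, apply the Borel evaluation. The main obstacle is the non-collapse argument for $P_w$: ruling out $P_w \equiv 2$ requires a quantitative understanding of how $\tr w$ varies when the pair leaves the Borel, and the cleanest implementation routes through the Magnus embedding $F/F^{(2)} \hookrightarrow GL_2(\mathbb Z[F^{\mathrm{ab}}])$, which records $w$ modulo $F^{(2)}$ faithfully via Fox derivatives and delivers the required trace fluctuation.
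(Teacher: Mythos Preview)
Your overall architecture matches the paper's sketch exactly: the paper (citing Bandman) says ``the proof is reduced to verification of the surjectivity of $w(x,y)$ on the unipotent elements'', with the semisimple elements handled by trace polynomial computations that work for \emph{every} non-trivial $w$. Your unipotent step---evaluate on the Borel $B\cong\mathbb C\rtimes\mathbb C^*$, note $B^{(2)}=1$, and use the Magnus embedding $F/F^{(2)}\hookrightarrow \mathbb Z[F^{\mathrm{ab}}]^2\rtimes F^{\mathrm{ab}}$ to see that $w\notin F^{(2)}$ forces some Fox derivative $\overline{\partial_x w},\overline{\partial_y w}\in\mathbb Z[s^{\pm1},t^{\pm1}]$ to be non-zero, then specialize---is correct and is precisely where the hypothesis $w\notin F^{(2)}$ enters.

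The gap is in your second substep, and it is a gap of emphasis as much as of rigor. You call the non-constancy of $P_w$ the ``main obstacle'' and propose a first-order deformation out of $B$ governed by Fox derivatives. That deformation argument is not justified as written (you have not explained why the linearized trace variation is controlled by the \emph{same} Fox data, nor why it cannot vanish identically along every direction leaving $B$), and more importantly it is unnecessary: non-constancy of $P_w$ holds for \emph{every} non-trivial word $w$, with no metabelian hypothesis. Borel's theorem (stated in the paper as Theorem~\ref{th:borel}) gives Zariski-dense image for $w$ on $SL_2(\mathbb C)$, so the image cannot lie in the hypersurface $\{\tr=2\}$; equivalently, choose $(A_1,A_2)$ generating a free subgroup and observe $w(A_1,A_2)\neq\pm I$ with trace varying. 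Once $P_w$ is a non-constant polynomial on $\mathbb C^3$, the Nullstellensatz makes it surjective, and Fricke--Vogt lifts any $(\alpha,\beta,\gamma)$ to an actual pair in $SL_2(\mathbb C)^2$, so all non-unipotent classes are hit. Replace your deformation paragraph with this one-line appeal and the plan is complete; the genuine content, as the paper says, is the unipotent case you already handled.
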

\noindent Here $F^{(1)}=[F,F]$ and $F^{(2)}=[F^{(1)}, F^{(1)}]$ are
the first and the second terms of the lower central series of the
two-generated free group $F=F(x,y)$. The proof  is reduced to
verification of the surjectivity of $w(x,y)$ on the unipotent
elements. Note that the surjectivity of $w(x,y)$ on the semisimple
elements of $PSL_2(\mathbb C)$ can be derived by computations of
trace polynomials (see, for example, \cite{Go}) and application of
methods of \cite{BGKJ}, \cite{BGaK}. Another proof of the
surjectivity of $w(x,y)$ on the semisimple elements can be found in
\cite{KBMR2}.

From Theorem \ref{ban:1} it follows that a key to solution of
Conjecture \ref{conj:1} lies in behaviour of words $w$ sitting in
$F^{(2)}$ or deeper in the lower central series. In this sense a
result of \cite{Ba1} stating  that the quasi-Engel maps
$w(x,y)=s_n(x,y)$ are surjective for any $n\geq 1$ is of promising
importance. Yet other computations from the same paper show that the
word
\begin{equation} \label{eq:f2}
w(x,y)= [[x, [x, y]], [y,[x, y]]]\in F^{(2)}
\end{equation}
is surjective on $PSL_2(\mathbb C)$ as well.

Conjecture \ref{conj:1} is a particular case of \cite[Question
2]{KBKP}. We repeat this question here:
\begin{problem}\label {pr:vestnik}
Let $\mathcal G$ be the class of simple groups $G$ of the form
$G=\mathbb G(k)$ where $k=\bar k$ is an algebraically closed field
and $\mathbb G$ is a semisimple adjoint linear algebraic group. Is
it true that word maps  are surjective for all non-power,
non-trivial words?
\end{problem}

It should be noted that the power map $w=x^n$ is surjective on an
arbitrary $G=\mathbb G(k)$ if $n$ is prime to 30 (see \cite{Stb},
\cite{Cha1}--\cite{Cha4}, \cite{Kun1}). Moreover, the only simple
algebraic adjoint group with extracting arbitrary roots is of type
A$_n$. Hence,  Problem \ref{pr:vestnik} for the groups of type A$_n$
can be reformulated as follows:

\begin{problem}\label{pr:slozhaja}
Is the word map $w\colon PSL_n(\mathbb C)\times PSL_n(\mathbb C)\to
PSL_n(\mathbb C)$ surjective for any non-trivial $w(x,y)\in
F_2(x,y)$?
\end{problem}
Despite many efforts applied to  Problem \ref{pr:vestnik} within
past years there are only few developments for specific word maps.
For instance, the commutator map $w(x,y)=[x,y]$ is surjective on
every semisimple adjoint linear algebraic group $G=\mathbb G(k)$
over an algebraically closed field $k$ \cite{Ree} (cf. the Ore
conjecture for finite simple groups: every element of a group is a
commutator). The next theorem concerning Engel words is due to
N.~Gordeev.

\begin{theorem} \cite{Gorde1} \label{th:q}
The image of the Engel map $e_n(x,y)$ on a semisimple adjoint linear
algebraic group $G=G(k)$, $k=\bar k$, contains all semisimple and
all unipotent elements.
\end{theorem}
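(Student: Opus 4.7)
The plan splits the target into its two components (unipotent and semisimple) and treats each by reducing to rank-one computations; the semisimple case requires an additional inductive scheme.

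For the unipotent case I would use the Jacobson--Morozov theorem to obtain, for each unipotent $u\in G$, a homomorphism $\varphi\colon SL_2(k)\to G$ and a unipotent $u_0\in SL_2(k)$ with $\varphi(u_0)=u$. The Corollary above established surjectivity of $e_n$ on $PSL_2(\CC)$, and the same argument (the identity $e_n(a,b)=(ab)^{2^n}$ for involutions $a,b$, combined with $2^n$-th root extraction) works over any algebraically closed field. This furnishes $x_0,y_0\in SL_2(k)$ with $e_n(x_0,y_0)=u_0$, and then $e_n(\varphi(x_0),\varphi(y_0))=u$. A short additional check handles the lift from $PSL_2$ to $SL_2$ when $\varphi$ is injective, using that the target $u$ is non-central.

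For the semisimple case, fix a maximal torus $T$ and conjugate $s$ into $T$. The central identity is $e_n(x,y)=\phi_y^n(x)$ with $\phi_y(g):=[g,y]$. For $y\in T$ regular and $n_w\in N_G(T)$ representing $w\in W$, one has $\phi_y(n_w)=w(y)y^{-1}\in T$. Choosing $w$ a Coxeter element makes the endomorphism $w-1$ of $T$ surjective (its characteristic polynomial on $X^*(T)\otimes\Q$ is the $h$th cyclotomic polynomial $\Phi_h(t)$, and $\Phi_h(1)\ne 0$), so for every $s\in T$ one finds $y\in T$ with $\phi_y(n_w)=s$; this settles $n=1$. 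For $n\geq 2$ the scheme is a descending induction: it suffices to show that $n_w$, for the chosen Coxeter element, lies in the image of $\phi_y^{n-1}$. Combining the Bruhat decomposition with the root-subgroup formula $\phi_y(u_\alpha(t))=u_\alpha(t(1-\alpha(y)))$, one aims to reduce each inductive step to a commutator equation in a root $SL_2$, hence again to a rank-one problem.

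The main obstacle is precisely this inductive step for $n\geq 2$. For fixed regular $y\in T$ the image $\phi_y^{n-1}(G)$ has codimension at least $\operatorname{rk}G$, so no naive dominance argument forces it to meet the $\operatorname{rk}G$-dimensional coset $n_wT$. I expect the fix to require either (a) letting $y$ vary along the induction, choosing at each step a new regular semisimple element so that successive images align with the required cosets, or (b) linearizing to the Lie algebra, where $e_n$ becomes $\mathrm{ad}(Y)^n(X)$ and the corresponding surjectivity statement (image covers the ad-nilpotent directions; varying $Y$ recovers the Cartan) is much cleaner, and then transferring back to $G$ via the exponential map in characteristic zero.
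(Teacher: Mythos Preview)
The paper does not prove this theorem; it is quoted verbatim from Gordeev \cite{Gorde1} with no argument supplied. So there is no ``paper's own proof'' to compare against, and your proposal must be judged on its own.

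Your unipotent argument is essentially sound in characteristic zero: Jacobson--Morozov furnishes the needed $SL_2\to G$, and the involution identity $e_n(a,b)=(ab)^{2^n}$ plus root extraction then covers every unipotent. Two caveats: (i) the statement is over an arbitrary algebraically closed field, and Jacobson--Morozov in the form you invoke breaks down in small characteristic, so a characteristic-free treatment of the unipotent case needs a different embedding result (e.g., Testerman's $A_1$-subgroup theorems in good characteristic, plus separate handling of bad primes); (ii) in characteristic $2$ the involution trick itself is problematic, since elements of order $2$ in $PSL_2(k)$ are unipotent rather than semisimple.

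The semisimple case is where your proposal has a genuine gap, and you identify it yourself: the step from $n=1$ to general $n$ is not carried out. Your $n=1$ argument via the Coxeter endomorphism $w-1$ of $T$ is clean, but the ``descending induction'' you sketch requires hitting the specific coset $n_wT$ with $\phi_y^{n-1}$, and neither of your proposed fixes (a) or (b) is more than a hope. Option (b) in particular does not transfer cleanly: even in characteristic zero, the group Engel map is not the exponential of the Lie-algebra Engel map, because iterated group commutators and iterated $\mathrm{ad}$'s differ by higher-order terms governed by Baker--Campbell--Hausdorff, so surjectivity of $\mathrm{ad}(Y)^n$ onto semisimple directions does not directly yield surjectivity of $e_n$ onto semisimple group elements. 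As it stands, then, your proposal is an outline with the key inductive lemma missing; it is not yet a proof.
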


So, for elements $g = tu$ that have the Jordan form with non-trivial
semisimple part $t$ and non-trivial unipotent part $u$, it is unclear
whether they are covered by $e_n(x,y)$. In the same paper it is proved
that the Engel map is surjective on $PSp_4(k)$ and $G_2(k)$. Summing up,
the surjectivity of Engel words is unknown for all groups except
$PSL_2(k)$, $PSp_4(k)$ and $G_2(k)$. For quasi-Engel maps the only
treated case is $PSL_2(k)$ \cite{Ba1}.

Now we are interested in estimating $wd_w(G)$, where $w(x_1,\ldots,
x_n)$ is any word in $F(X)$. The amazing theorem of A.~Borel
\cite{Bo} (see also \cite{La}, \cite{KBKP}) states that the image of
$w$ on every appropriate algebraic group is large in Zariski
topology.

\begin{theorem} \cite{Bo} \label{th:borel}
Let $w(x_1,\ldots, x_n)$ be a non-trivial word, let $G$ be a
connected semisimple linear algebraic $k$-group, and let $k$ be an
arbitrary field.  The image of the word map $w\colon G^n(k)\to G(k)$
is Zariski dense.
\end{theorem}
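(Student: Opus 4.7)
The plan is to reduce the statement to dominance of the word map as a morphism of algebraic varieties, and then to verify dominance by the differential criterion. First I would pass to an algebraic closure $\overline{k}$: for a connected linear algebraic group $\mathbf{G}$ over an infinite field, the set $G(k)$ is Zariski dense in $G(\overline{k})$, so density of $w(G(k)^n)$ in $G(k)$ follows from density of $w(G(\overline{k})^n)$ in $G(\overline{k})$ (the finite-field case requires a separate argument, but the semisimple situation over $\overline{k}$ is the heart of the matter). Over $\overline{k}$, the word map $w\colon G^n \to G$ is a morphism of irreducible varieties, and by Chevalley's theorem on constructible images it contains a dense open subset of its Zariski closure. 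Hence it suffices to prove that $w$ is dominant.

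For dominance I would invoke the standard criterion that a morphism of smooth irreducible varieties is dominant as soon as its differential is surjective at some single point. The task thus reduces to exhibiting a tuple $\mathbf{g}=(g_1,\dots,g_n)\in G^n$ at which
\[
(dw)_{\mathbf{g}}\colon T_{\mathbf{g}}G^n \longrightarrow T_{w(\mathbf{g})}G
\]
is surjective. Identifying tangent spaces with $\mathfrak{g}=\operatorname{Lie}\mathbf{G}$ via (left) translations and perturbing by $x_i = g_i\exp(t_i \xi_i)$ with $\xi_i\in\mathfrak{g}$, the differential becomes a logarithmic derivative of $w$: an explicit sum, indexed by the syllables of $w$, of $\operatorname{Ad}$-translates of the variations $\xi_i$ by certain subproducts of the $g_j^{\pm 1}$.

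The core of the argument is now to show that for suitable choices of $\mathbf{g}$ and $(\xi_1,\dots,\xi_n)$, these $\operatorname{Ad}$-translates span $\mathfrak{g}$. Two inputs come together here. On the combinatorial side, the nontriviality of $w$ as an element of $F_n$ must ensure that the formal logarithmic derivative does not vanish identically on $G^n\times \mathfrak{g}^n$; one proves this by a Fox-calculus-type inspection, or inductively on syllable length, guaranteeing a nonzero $\operatorname{Ad}$-translate of some $\xi_i$ in the expansion. On the geometric side, semisimplicity of $\mathbf{G}$ ensures that $\mathfrak{g}$ has no proper $\operatorname{Ad}(G)$-invariant subspace beyond its simple summands, and that $\operatorname{Ad}(G)$-orbits of nonzero vectors span the corresponding simple factor. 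Choosing $\xi_i$ in each simple component and varying the $g_j$ therefore forces surjectivity of $(dw)_{\mathbf{g}}$ at a generic point.

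The main obstacle I expect lies in the first input: separating the combinatorial structure of $w$ in the free group $F_n$ from the algebraic relations in $G$ so that cancellations forced by $G$ do not kill the derivative. Once formal non-vanishing of the logarithmic derivative is established, $\operatorname{Ad}$-saturation inside the semisimple group completes the differential calculation, Chevalley's constructibility yields an open subset of $G(\overline{k})$ in the image, and the descent from $\overline{k}$ to an arbitrary field $k$ is a routine density argument based on $G(k)$ being Zariski dense in $G(\overline{k})$.
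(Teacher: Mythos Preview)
Note first that the paper does not prove this theorem: it is simply quoted from Borel \cite{Bo} (see also Larsen \cite{La}), so there is no in-paper argument to compare against. Borel's own proof is quite different from what you outline: he does not compute differentials. He first establishes the rank-one case by showing that $\tr\circ w$ is non-constant on $\SL_2(\overline k)$, using that $\SL_2(\overline k)$ contains a non-abelian free subgroup on which $w$ is non-trivial; for general simple $G$ he then exploits the root-$\SL_2$ subgroups, conjugation-invariance of the closure of the image, and a product argument to force that closure to be all of $G$.

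Your differential route has a genuine obstruction in positive characteristic, which your passage to $\overline k$ does not avoid. Take $w=x^p$ with $p=\operatorname{char} k>0$. Then $(dw)_g$ sends $\eta\in\mathfrak g$ to $\sum_{j=0}^{p-1}\operatorname{Ad}(g)^{-j}\eta$; on the $\operatorname{Ad}(g)$-fixed subspace of $\mathfrak g$ this is multiplication by $p=0$, and that subspace is nonzero since every $g$ has positive-dimensional centraliser. Hence $(dw)_g$ is nowhere surjective, yet $g\mapsto g^p$ is dominant on any semisimple $G$ (semisimple elements are dense and each lies in a maximal torus, on which the $p$-th power map is surjective over $\overline k$). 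So the implication ``differential surjective at one point $\Rightarrow$ dominant'' is simply unavailable: word maps need not be separable. Even in characteristic zero your argument is incomplete at the decisive step: non-vanishing of the Fox derivative gives that $\bigcup_{\mathbf g}\operatorname{Im}(dw)_{\mathbf g}$ is a nonzero $\operatorname{Ad}(G)$-stable subset of $\mathfrak g$, hence spans $\mathfrak g$ when $G$ is simple, but this does not by itself yield surjectivity of $(dw)_{\mathbf g}$ at a \emph{single} $\mathbf g$; an additional semicontinuity/irreducibility argument is required. That gap is repairable in characteristic zero, but the inseparability issue in characteristic $p$ is not, and the theorem is asserted over an arbitrary field.
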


The next corollary confirms that for an algebraically closed field
the image is indeed big.

\begin{corollary}
Let $k$ be an algebraically closed field. Then $w(G(k))^2=G(k)$.
\end{corollary}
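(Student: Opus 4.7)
The plan is to upgrade the Zariski density of $w(G(k))$ provided by Theorem \ref{th:borel} to the stronger statement that $w(G(k))$ contains a Zariski open dense subset of $G$, and then to use the elementary fact that two nonempty open subsets of an irreducible variety must meet.

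First I would invoke Chevalley's theorem: since $w\colon G^n\to G$ is a morphism of $k$-varieties and $k$ is algebraically closed, the image $w(G(k))$ is a constructible subset of $G(k)$. By Theorem \ref{th:borel} this constructible set is Zariski dense, and a constructible dense subset of an irreducible variety always contains a nonempty open subset; since a connected semisimple group is irreducible as a variety, this produces a nonempty Zariski open $U\subseteq G(k)$ with $U\subseteq w(G(k))$.

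Given $g\in G(k)$, I would then consider the sets $U$ and $gU^{-1}$. Both are nonempty Zariski open subsets of $G(k)$, because inversion and left translation by $g$ are automorphisms of the variety $G$. Since $G$ is irreducible, $U\cap gU^{-1}\neq\emptyset$; picking any $h$ in this intersection and writing $h=gu^{-1}$ with $u\in U$ gives $g=hu\in U\cdot U\subseteq w(G(k))^2$, which is what we want. The only ingredient beyond general nonsense is the step from \emph{constructible and dense} to \emph{contains a dense open subset}, which is a routine consequence of Chevalley's characterization of constructible sets as finite unions of locally closed subsets in an irreducible ambient variety, so I expect no serious obstacle.
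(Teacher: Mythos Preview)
Your proof is correct and is exactly the standard argument for deducing this corollary from Borel's theorem: the word map is a morphism of varieties, so over an algebraically closed field its image is constructible by Chevalley's theorem, hence contains a nonempty Zariski open set $U$; then for any $g$ the open sets $U$ and $gU^{-1}$ meet in the irreducible variety $G$, giving $g\in U\cdot U\subseteq w(G(k))^2$. The paper itself states the corollary without proof, treating it as a well-known consequence, so there is nothing further to compare---what you have written is precisely the argument one would supply.
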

The latter fact means that the width of any word map evaluated on a
connected semisimple algebraic group over an algebraically closed
field is less than or equal to two.

\subsection{Algebraic groups. Forms} For an extended discussion of the
situation with word maps in this case see \cite{Kun1}. Here we just
mention a few principal results.

First of all, the power word map $x^n$ is surjective for the compact
real forms of all semisimple algebraic groups $G$ (cf. \cite{Cha4}).
The fact that the commutator map $w(x,y)=[x,y]$ is surjective for
the case of compact Lie groups $G=SU_n(\mathbb R)$ goes back to
H.~T\^oyama and M.~Got\^o. The same is true for $\mathbb
R$-anisotropic simple algebraic groups $G=G(\mathbb R)$, see
\cite{To}, \cite{Go}. In contrast with the split case, the behaviour
of Engel word maps on anisotropic groups is well-known due to
A.~Elkasapy--A.~Thom (cf. also \cite{Gorde1}):

\begin{theorem} \cite{ET} \label{th:thom}
Let $G$ be an $\mathbb R$-anisotropic simple algebraic group. Then
for every $n\ge 1$ the $n$-Engel word map $e_n(x,y)\colon G^2\to G$
is surjective.
\end{theorem}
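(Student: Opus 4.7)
The plan exploits that an $\mathbb R$-anisotropic simple algebraic $\mathbb R$-group gives rise to a compact connected simple real Lie group $K := G(\mathbb R)$. The Engel map $e_n\colon K\times K\to K$ is then a real-analytic map between compact manifolds, so $\mathrm{Im}(e_n)$ is closed in $K$, invariant under diagonal conjugation, and contains~$1$; the goal is to show $\mathrm{Im}(e_n)=K$.

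First I would show that $\mathrm{Im}(e_n)$ has non-empty interior. Borel's Theorem~\ref{th:borel}, applied to the complexification, says that $e_n\colon G(\mathbb C)^2\to G(\mathbb C)$ has Zariski-dense image, hence is dominant. Because $K^2$ is Zariski dense in $G(\mathbb C)^2$ and $e_n$ is defined over $\mathbb R$, the differential $d(e_n)$ has full rank at some real point $(x_0,y_0)\in K^2$, so by the real-analytic implicit function theorem $\mathrm{Im}(e_n)$ contains a Euclidean neighbourhood of $e_n(x_0,y_0)$.

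To close the gap between this open set and all of $K$, I would fix a regular element $y$ in a maximal torus $T\subset K$ and analyse the family of maps $\phi_y\colon K\to K$, $x\mapsto e_n(x,y)$, as $y$ varies in $T$. Each $\phi_y$ is right-invariant under the centralizer $Z(y)=T$ and therefore descends to a map $\widetilde\phi_y\colon K/T\to K$. A Lie-algebra Taylor expansion shows that on each root space $\mathfrak k_\alpha$ of $\mathfrak k^{\mathbb C}$ the leading term of $e_n(\exp(X),y)$ is controlled by the non-zero scalar $(\zeta_\alpha-1)^n$, where $\zeta_\alpha$ is the eigenvalue of $\mathrm{Ad}(y)$ on $\mathfrak k_\alpha$. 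A signed transversality count for the resulting family $\{\widetilde\phi_y\}_{y\in T}$ then forces $\mathrm{Im}(e_n)$ to meet every regular conjugacy class of $K$; closedness and conjugation-invariance of $\mathrm{Im}(e_n)$, together with Euclidean density of the regular semisimple locus in $K$, then yield $\mathrm{Im}(e_n)=K$.

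The principal obstacle is this last transversality analysis. For a single fixed $y$ the map $\phi_y$ is constant on cosets of $Z(y)$, so its image has dimension strictly less than $\dim K$; surjectivity therefore has to be extracted from the entire family $\{\phi_y\}_{y\in T}$ at once rather than from any individual $\phi_y$. In addition, one must verify that the Baker--Campbell--Hausdorff corrections to the leading $n$-fold commutator do not cancel the principal $(\zeta_\alpha-1)^n$ term, and that the argument goes through uniformly across all compact simple Lie types. These Lie-theoretic verifications form the technical core of~\cite{ET}.
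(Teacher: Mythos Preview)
The survey paper does not itself prove Theorem~\ref{th:thom}; it merely quotes it from~\cite{ET}. So the relevant comparison is with the Elkasapy--Thom argument, whose nature is already advertised in the title of~\cite{ET}: it is Got\^o's method, and it is far more direct than what you outline.

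Here is the actual argument. Let $K=G(\mathbb R)$ be the compact connected group, fix a maximal torus $T\subset K$, and let $n_w\in N_K(T)$ represent a Coxeter element $w\in W$. For $s\in T$ one has $[s,n_w]=s\cdot n_ws^{-1}n_w^{-1}=s\cdot w(s)^{-1}\in T$; iterating, $e_n(s,n_w)\in T$ and, in exponential coordinates $T\simeq\mathfrak t/\Lambda$, the map $s\mapsto e_n(s,n_w)$ is induced by the linear endomorphism $(1-w)^n$ of $\mathfrak t$. Since $w$ is a Coxeter element of a simple root system, $1-w$ is invertible on $\mathfrak t$, hence $(1-w)^n$ is, and the induced self-map of $T$ is surjective. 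Thus $T\subset\mathrm{Im}(e_n)$. Because every element of $K$ is conjugate into $T$ and $\mathrm{Im}(e_n)$ is conjugation-invariant, $\mathrm{Im}(e_n)=K$. No Borel dominance, no implicit function theorem, no transversality, no BCH analysis is needed.

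Your plan, by contrast, leaves the decisive step open. You correctly observe that $\mathrm{Im}(e_n)$ is closed (compact domain) and contains a Euclidean open set (Borel dominance plus the submersion theorem). But a closed conjugation-invariant subset of $K$ with non-empty interior need not be all of $K$, so everything rests on your ``transversality count'' showing that every regular class is hit. That step is only gestured at, and as stated it is aimed at the wrong family: taking $y$ regular in $T$ makes $\mathrm{Ad}(y)$ act trivially on $\mathfrak t$, so your leading scalar $(\zeta_\alpha-1)^n$ controls only root-space directions, and you still have to produce the torus directions from somewhere. The clean choice is $y=n_w$, which immediately collapses the whole analysis to the one-line Got\^o computation above. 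In short, your route is not wrong in spirit, but the missing transversality argument is genuinely missing, and the known proof bypasses it entirely by a judicious choice of the second variable.
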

\noindent In particular, $e_n$ is surjective on compact Lie groups
$SU_n(\mathbb R)$. Nothing is known about the behaviour of
quasi-Engel maps. The general result similar to that of Theorem
\ref{ban:1} was earlier established by A.~Elkasapy--A.~Thom
\cite{ET} for compact groups: if $w(x,y)$ does not belong to
$F^{(2)}$, then the corresponding word map is surjective on
$SU_n(\mathbb R)$.

\begin{remark}
The compact counterparts of Conjecture \ref{conj:1} and Problem
\ref{pr:vestnik} have negative answers \cite{Thom}. From the
construction of A.~Thom it also follows that the
width $wd_w(G)$ may be infinite in the compact case. The reason is that for some words
$w$ the image of the corresponding word map is a small neighbourhood
of the identity matrix, in particular, is not dense in Euclidean
topology. The authors are not aware of any example of a non-power
word $w$ (i.e., $w$ not representable as a proper power of another
word) such that the corresponding map on a simple real algebraic
group would be dominant in Euclidean topology but not surjective.
\end{remark}

\subsection{Finite simple groups} New achievements in the theory of
word maps evaluated on finite simple groups may be considered as a
sort of catalyst which gave rise to ongoing interest in this theory.

Among jewels of the last years was a positive solution of Ore's
problem: every element of a finite simple group is, indeed, a single
commutator (see \cite{Or}, \cite{EG} and the final solution in
\cite{LOST1}; a nice survey is given in \cite{Mall}).

A formidable progress in the description of images of word maps for
finite simple groups was obtained by M.~Larsen and A.~Shalev, who
stimulated a development of this area of research under the name of
``Waring type problems''. The final result of Larsen--Shalev--Tiep
is as follows:
\begin{theorem}[\cite{LST1}]\label{th:fg}
Let $w$ be an arbitrary non-trivial word of $F(x_1,\ldots, x_n)$.
There exists a constant $N = N(w)$ such that for all non-abelian
simple groups of order greater than $N$ one has
$$w(G)^2 = G.$$
\end{theorem}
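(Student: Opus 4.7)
The plan is to use Frobenius-type character sums together with strong character bounds, reducing to the classification of finite simple groups (CFSG).

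Set-up. For $w \in F_k$ and an irreducible character $\chi$ of $G$, the fiber count $N_w(g) := \#\,w^{-1}(g)$ is a class function on $G$ (because words are conjugation equivariant), so Schur's lemma shows that the Fourier coefficient of $N_w$ on the representation afforded by $\chi$ is a scalar, explicitly
\begin{equation*}
\xi_w(\chi) := \frac{1}{|G|^k\,\chi(1)} \sum_{(x_1,\ldots,x_k) \in G^k} \chi\bigl(w(x_1,\ldots,x_k)\bigr).
\end{equation*}
Convolution on $G$ then yields the key identity
\begin{equation*}
\frac{\#\{(\vec x,\vec y) \in G^{2k} : w(\vec x)\,w(\vec y) = g\}}{|G|^{2k}} = \frac{1}{|G|}\sum_{\chi \in \mathrm{Irr}(G)} \xi_w(\chi)^2\,\chi(1)^2\,\overline{\chi(g)}.
\end{equation*}
The trivial character contributes the main term $1/|G|$, and it suffices to show that the remaining sum is dominated by $1/|G|$ uniformly in $g$, as this forces the left-hand side to be positive and hence $g \in w(G)^2$.

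The central estimate is a bound of Larsen--Shalev type
\begin{equation*}
|\xi_w(\chi)| \leq \chi(1)^{-1-\eta_w} \qquad \text{for all } \chi \neq 1,
\end{equation*}
where $\eta_w > 0$ depends only on $w$. Combined with the trivial bound $|\chi(g)| \leq \chi(1)$, this reduces the matter to controlling a Witten-type zeta sum
\begin{equation*}
\zeta_G(s) := \sum_{\chi \neq 1} \chi(1)^{-s},
\end{equation*}
which by the representation-growth theorems of Liebeck--Shalev satisfies $\zeta_G(s) \to 0$ as $|G| \to \infty$ along any family of non-abelian finite simple groups, for every fixed $s > 0$. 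Hence for $|G|$ larger than some $N = N(w)$ the error sum is less than $1$ in absolute value, forcing the product count to be strictly positive for every $g$.

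Reduction via CFSG. One splits into (a) alternating groups $A_n$ with $n$ large, (b) classical groups of Lie type, and (c) the finitely many families of exceptional groups of Lie type. In each case the required character bound must be established separately. For $A_n$ one exploits the combinatorics of $S_n$-characters together with estimates on $|\chi(g)|/\chi(1)$ for elements with many fixed points, in the spirit of Fuchsian group arguments. For Lie-type groups one combines Deligne--Lusztig theory, Lusztig's classification of irreducible characters, and the level-based character ratio bounds developed by Guralnick--Larsen--Tiep and Bezrukavnikov--Liebeck--Shalev--Tiep.

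Main obstacle. The hard part is precisely the uniform character bound $|\xi_w(\chi)| \leq \chi(1)^{-1-\eta_w}$. Translating a question about the image of a word map into one about a character sum requires control of the pushforward measure of $w$ on $G$, and a priori one has only trivial information about $w(G)$. The argument therefore passes through (i) sharp fiber-size bounds for word maps on finite simple groups, of Nikolov--Segal and Larsen--Shalev type, and (ii) sharp ``character ratio'' inequalities $|\chi(g)/\chi(1)| \leq \chi(1)^{-\delta(g)}$ with $\delta(g) > 0$ depending on the ``support type'' (or level) of $g$. Assembling these into a single bound that is simultaneously uniform in $\chi$, in $g$, and in the family of simple groups, while keeping the exponent $\eta_w$ depending only on $w$, is the technical heart of the Larsen--Shalev--Tiep program and the step at which the argument is least routine.
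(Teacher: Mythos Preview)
The paper is a survey and does not prove this theorem at all; it merely quotes the result from \cite{LST1}. There is therefore no in-paper proof to compare your proposal against.

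As a sketch of the Larsen--Shalev--Tiep argument, your outline has the right general flavor (Frobenius-type counting, character sums, the Witten zeta function, the CFSG case split), but the top-level architecture you describe is not the one actually used in \cite{LST1}. You organize the proof around a uniform Fourier bound $|\xi_w(\chi)|\le \chi(1)^{-1-\eta_w}$ for all nontrivial $\chi$, and then sum over $\chi$ using $|\chi(g)|\le\chi(1)$. Such a direct bound on the Fourier coefficients of the word measure is essentially the mechanism of the earlier Larsen--Shalev paper \cite{LS1}, and with the crude estimate $|\chi(g)|\le\chi(1)$ it yields $w(G)^3=G$, not $w(G)^2=G$; a uniform exponent $\eta_w>0$ strong enough to push the argument down to two factors is not what \cite{LST1} establishes. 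The actual proof in \cite{LST1} proceeds in two stages: first one uses the probabilistic/fiber results of \cite{LS1} to show that $w(G)$ contains a specific conjugacy class $C$ with very small centralizer (regular semisimple elements from a carefully chosen torus in the Lie-type case, elements of prescribed cycle type in the alternating case), and then one proves $C\cdot C=G$ via the classical Frobenius formula together with sharp character-ratio bounds $|\chi(c)|$ for $c\in C$ (bounded in terms of the Weyl group, independently of $\chi(1)$). So the ingredients you list under ``main obstacle'' --- fiber-size control and character-ratio bounds --- are indeed the right ones, but they are assembled differently: not into a global estimate on $\xi_w(\chi)$, but into the two-step ``find a good class in $w(G)$, then square that class'' scheme.
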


So, for every $w$ the $w$-width of a finite simple group is less
than or equal to two. We believe that the coincidence of widths in
Borel's theorem for algebraic groups and the theorem of
Larsen--Shalev--Tiep quoted above might have a conceptual
explanation in the spirit of principles of model theory: loosely
speaking, a statement on algebraic varieties formulated over an
algebraically closed field should have a counterpart over every
sufficiently large finite field. Classical theorems of
Ax--Kochen--Grothendieck can serve as instances of such a principle,
see \cite{Ser}. This approach is awaiting an interested reader.
We also refer to the papers \cite{Sh1}--\cite{Sh3}, \cite{LST1},
\cite{LS1}, \cite{BGaK}, \cite{BGG}, \cite{KBKP}, \cite{NS}, \cite{MZ},
\cite{SW}, \cite{JLO}, \cite{KN}, \cite{Le}, etc., for details,
surveys and further explanations.

\medskip

Returning to specific word maps evaluated on finite simple groups,
even the case of Engel words $e_n(x,y)$ is still widely open. The
only known result is the surjectivity of $e_n$ on $PSL_2(\mathbb F_q)$
obtained in the paper of Bandman--Garion--Grunewald.
\begin{theorem} \cite{BGG} \label{th:bgg}
Let $G = PSL_2(\mathbb F_q)$. The $n$-th Engel word map $e_n(x,y)\colon
G^2\to G$ is surjective provided that $q > q_0(n)$. If $n\leq 4$,
then $e_n(x,y)$ is surjective for every~$q$.
\end{theorem}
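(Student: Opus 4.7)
The plan is to adapt the approach that works for $PSL_2(\mathbb C)$ (the Corollary after Proposition \ref{prop:2}) and make it effective over $\mathbb F_q$ via trace polynomials and point-counting. First I would lift everything to $SL_2(\mathbb F_q)$: surjectivity of $e_n$ on $PSL_2(\mathbb F_q)$ is equivalent to asking that for every $g\in SL_2(\mathbb F_q)$ there exist $x,y\in SL_2(\mathbb F_q)$ with $e_n(x,y)=\pm g$. By the classical Fricke--Vogt invariant theory of pairs in $SL_2$, the trace of any word $w(x,y)$ is a polynomial $P_w(s,u,t)$ in the three variables $s=\tr x$, $u=\tr y$, $t=\tr(xy)$; starting from the Fricke identity $P_1(s,u,t)=s^2+u^2+t^2-sut-2$ for $e_1=[x,y]$ and the recursion $e_{n+1}=[e_n,y]$, the polynomials $P_n$ are generated by an explicit two-term trace recursion that also tracks $\tr(e_n\cdot y)$. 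Since the $SL_2$-conjugacy class of a regular semisimple element is determined by its trace, solving $e_n(x,y)=g$ up to conjugation reduces to two tasks for every target trace $\tau=\tr g$: (i) solve $P_n(s,u,t)=\tau$ in $\mathbb F_q^3$; (ii) realize a solution triple $(s,u,t)$ by an actual pair $(x,y)$ whose Engel value lands in the prescribed conjugacy class, not merely at the prescribed trace.

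For step (i), consider the affine surface $\Sigma_{n,\tau}=\{P_n=\tau\}\subset\mathbb A^3$. Control of the leading homogeneous form of $P_n$, obtained by induction from the commutator trace recursion, should yield absolute irreducibility of $P_n-\tau$ outside a controlled exceptional set of $\tau$; once this is in hand, the Lang--Weil bound
\begin{equation*}
\#\Sigma_{n,\tau}(\mathbb F_q)=q^2+O_n(q^{3/2})
\end{equation*}
produces an $\mathbb F_q$-point as soon as $q>q_0(n)$. For step (ii) I would invoke that the map from the representation variety $\Hom(F_2,SL_2)$ to the character scheme $\mathbb A^3$ is surjective on the absolutely irreducible locus, together with a dimension count showing that $\Sigma_{n,\tau}$ meets this locus. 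The exceptional target traces $\tau=\pm2$ (the unipotent and central classes of $PSL_2$) are handled by an ad hoc substitution: specialize $x$ to a unipotent Jordan block and $y$ to a matrix with one free entry, turning $e_n(x,y)=g$ into a univariate polynomial equation of controlled degree that is guaranteed to have a root in $\mathbb F_q$ once $q$ is large enough.

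For $n\leq 4$ the constant $q_0(n)$ must be eliminated altogether. Here I would fix a carefully chosen one-parameter family (for instance $x$ lower-unitriangular and $y$ of prescribed anti-diagonal shape) so that the trace equation degenerates into a polynomial in a single unknown of degree bounded explicitly by $2^n$; the Hasse--Weil bound on the resulting curve, supplemented by direct verification of the short finite list of small $q$ where that bound is vacuous, settles every $q$. The hard part throughout is the absolute irreducibility and non-degeneracy analysis of $P_n-\tau$: one has to rule out factorizations that could appear only in small characteristic, and to control the reducible locus of pairs $(x,y)$ inside the fiber $\{P_n=\tau\}$, since on that reducible locus $e_n(x,y)$ is unipotent and therefore cannot hit a regular semisimple target. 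This geometric input, rather than the point-counting itself, is what forces the threshold $q_0(n)$ in the general case and demands the most delicate case analysis for $n\leq 4$.
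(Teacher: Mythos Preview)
This paper is a survey and does not itself prove Theorem~\ref{th:bgg}; it cites \cite{BGG} and records only that ``the main ingredient of the proof is dynamics of trace maps, see \cite{BGaK}.'' Your outline shares with \cite{BGG} the reduction to Fricke coordinates $(s,u,t)=(\tr x,\tr y,\tr xy)$ and the appeal to point-counting over $\mathbb F_q$, so the overall framework is the right one.

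The substantive difference, and the place where your sketch has a genuine gap, is in how the geometry is controlled. You propose to obtain absolute irreducibility of the surface $\{P_n=\tau\}\subset\mathbb A^3$ by induction on the leading homogeneous form of $P_n$; since $\deg P_n$ grows like $2^n$, I do not see how to make that step go through as stated, and \cite{BGG} does not attempt it. Instead, Bandman--Garion--Grunewald exploit the dynamical structure: the recursion $e_{n+1}=[e_n,y]$ descends to a single explicit polynomial self-map $\psi$ of the trace plane (with $u=\tr y$ a parameter), so that hitting trace $\tau$ at stage $n$ is a question about the iterate $\psi^{\,n}$. Irreducibility and the relevant genus/degree estimates are then read off from properties of $\psi$ as a dynamical system, and the Weil bound is applied to the resulting curves rather than Lang--Weil to a surface. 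This is precisely the ``dynamics of trace maps'' the survey highlights, and it is what yields a workable, uniform-in-$n$ argument with an explicit $q_0(n)$. A smaller point in your step~(ii): you should say why an $\mathbb F_q$-point of the character scheme actually lifts to a pair $(x,y)\in SL_2(\mathbb F_q)^2$; over a finite field this is available via Lang's theorem applied to the $PGL_2$-torsor of irreducible representations with given character, but it is not automatic and deserves a sentence.
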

\noindent The main ingredient of the proof is dynamics of trace
maps, see \cite{BGaK}.

\medskip

\subsection{Finite-dimensional simple Lie algebras} In this case
$W(X)$, $X=\{x_1,\ldots, x_n\}$, is the finite-dimensional free Lie
algebra, that is, the algebra of Lie polynomials with coefficients
from a field $k$. We will consider word maps $w\colon L^n\to  L$
where $w\in W(X)$, $L$ is a split semisimple Lie algebra.

Let us treat the brackets in maps (\ref{2}) as Lie operations. The
role of quasi-Engel maps (\ref{4}), (\ref{4'}) is played by the
sequence
\begin{equation}
v_1(x, y) = [x, y],\ldots , v_n(x, y) = [[v_{n-1}(x,y), x],
[v_{n-1}(x,y), y]],
\end{equation}
\noindent which is related to the solvability property of Lie
algebras \cite{BBGKP}, \cite{GKP}.

In this setting, many problems on word maps formulated above for
groups have solutions for Lie algebras.  For example, these are the
Ore problem \cite{Br} and the problem of the surjectivity of Engel
word maps \cite{BGKP}. The analogue of Conjecture~\ref{conj:1} has a
partial solution \cite{KBMR2}. Namely, the image of any non-trivial
homogeneous  $w\in W(X)$ evaluated on $\mathfrak sl_2(k)$, where $k$
is an algebraically closed field, is either~$0$, or the set of all
non-nilpotent traceless matrices, or $\mathfrak sl_2(k)$. Note that
the trivial image can appear, since for simple Lie algebras $w$ can
be an identity of $L$. Further, the Lie polynomial
\begin{equation}\label{pol}
v_2(x,y)=[[[x,y|, x], [[x,y], y]]
\end{equation}
covers only the non-nilpotent values on $\mathfrak sl_2(k)$
\cite{BGKP}, realizing the second possibility for images of an
arbitrary word $w$. In particular, this means that the quasi-Engel
word $w(x,y)$ is not surjective on $\mathfrak sl_2(k)$.  The proof
in the Lie case is a variation of a similar general result on
multilinear polynomials evaluated on simple associative algebras,
which is due to Kanel-Belov--Malev--Rowen:
\begin{theorem} \cite{KBMR1} \label{th:bmr}
If $w$ is a multilinear polynomial evaluated on the matrix ring
$M_2(k)$ (where $k$ is a quadratically closed field), then the image
of $w$ is either $\{0\}$, or $k$, or $\mathfrak sl_2(k)$, or the
full matrix algebra $M_2(k)$.
\end{theorem}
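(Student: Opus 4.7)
The plan is to combine the multilinear and conjugation-invariance structure of $w$ with the conjugacy classification of $M_2(k)$ over a quadratically closed field. First, multilinearity gives $\alpha\,w(a_1,\ldots,a_n)=w(\alpha a_1,a_2,\ldots,a_n)$ for every $\alpha\in k$, so $\Imm(w)$ is a cone through the origin. The evaluation of $w$ is $GL_2(k)$-equivariant under simultaneous conjugation, so $\Imm(w)$ is a union of $GL_2(k)$-conjugacy classes. Quadratic closure of $k$ ensures that the characteristic polynomial of every $2\times 2$ matrix splits, so each conjugacy class is pinned down by its Jordan form and is characterised by the pair $(\tr,\det)$ together with the scalar/non-scalar dichotomy for repeated eigenvalues.

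I would then split on the trace function $T(a_1,\ldots,a_n):=\tr w(a_1,\ldots,a_n)$. As a scalar multilinear polynomial, $T$ is either identically zero or surjective onto $k$ (rescale one slot). Writing $w$ as a scalar part plus a traceless part $w_0$ then leads to four regimes: $w\equiv 0$ (image $\{0\}$); $w$ takes only scalar values (image $k\cdot I\cong k$); $T\equiv 0$ but $w\not\equiv 0$ (image $\subseteq\mathfrak{sl}_2(k)$); and both $T$ and the traceless part non-zero.

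The remaining task is full surjectivity in the last two regimes. The strategy is to produce one evaluation yielding a non-scalar matrix with distinct eigenvalues, after which scaling and conjugation sweep out every diagonalisable conjugacy class in the prescribed trace slice. The main obstacle is covering the non-diagonalisable Jordan classes---the non-zero nilpotents in the traceless regime, and the non-scalar matrices with repeated eigenvalue in the mixed regime---since these are not conjugate to any diagonal matrix. To reach them I would expand $w$ into a normal form via the Cayley--Hamilton identity on $M_2$ (which rewrites longer products of generic $2\times 2$ matrices linearly in shorter ones, modulo trace terms), and then exhibit explicit substitutions (elementary matrices together with a diagonal) that produce a non-zero nilpotent and a Jordan block, respectively. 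This final step is the technically decisive one: it is the restriction to \emph{multilinear} $w$, rather than an arbitrary non-commutative polynomial, that prevents $\Imm(w)$ from collapsing to, for instance, the nilpotent cone alone.
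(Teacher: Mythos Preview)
The paper does not prove this theorem; it is quoted from \cite{KBMR1} as an external result, with no argument supplied. So there is no ``paper's own proof'' to compare against, and I can only assess your outline on its own merits.

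The reductions in your first two paragraphs are sound. The image of a multilinear $w$ is indeed a $k^{*}$-cone closed under $GL_2$-conjugation, and the trace dichotomy correctly isolates the four regimes. In the traceless regime your ``one evaluation with distinct eigenvalues, then scale and conjugate'' step really does cover all diagonalisable elements of $\mathfrak{sl}_2(k)$, leaving only the nilpotent class---which you rightly flag as the crux and do not actually carry out.

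There is, however, a genuine gap in the mixed regime ($T\not\equiv 0$ and $w_0\not\equiv 0$). Scaling a single value $A$ with eigenvalues $\lambda_1\neq\lambda_2$ only produces matrices with eigenvalues $(\alpha\lambda_1,\alpha\lambda_2)$; the ratio $\lambda_1/\lambda_2$ is frozen, and conjugation does not move it either. Hence you do \emph{not} sweep out all diagonalisable conjugacy classes from one evaluation, and the phrase ``in the prescribed trace slice'' does not repair this, since scaling moves you off every nonzero trace slice. Reaching all of $M_2(k)$---in particular every scalar $\alpha I$ with $\alpha\neq 0$ and every eigenvalue ratio---requires genuinely varying the evaluation point, not just rescaling one. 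The proof in \cite{KBMR1} handles this and the nilpotent/Jordan-block obstruction together by a combinatorial analysis of substitutions by matrix units $e_{ij}$ (values of a multilinear $w$ on matrix units are again matrix units or zero, and which ones occur is governed by an Eulerian-path condition on an auxiliary directed graph). That mechanism, or something playing the same role, is missing from your sketch; the Cayley--Hamilton rewriting you invoke does not by itself produce the needed evaluations.
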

\noindent The question whether a multilinear Lie polynomial $w$ can
take only the values 0 or $\mathfrak sl_2(k)$ on  $\mathfrak
sl_2(k)$ is still open. We will finish this brief overview with
presenting a Lie-algebraic counterpart of Borel's theorem:

\begin{theorem} \cite{BGKP} \label{borel-lie}
Let $L$ be a split semisimple Lie algebra. Suppose that a Lie polynomial
$w(x_1,\ldots,x_n)$ is not an identity of the Lie algebra $\mathfrak
sl_2(k)$. Then the image of $w\colon L^n\to L$ is Zariski dense.
\end{theorem}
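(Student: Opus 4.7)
The plan is to imitate Borel's dominance argument in the Lie-algebra setting: to exhibit a tuple $(a_1,\dots,a_n)\in L^n$ at which the differential $dw$ is surjective onto $L$, whence $w$ is dominant and $w(L^n)$ contains a Zariski-open, hence dense, subset of $L$. First I reduce to $L$ simple: a split semisimple $L$ decomposes as a direct sum of split simple ideals $L_i$, the word map acts summand-wise, and every split simple $L_i$ contains $\mathfrak{sl}_2(k)$ as a root subalgebra $\mathfrak{sl}_2^\alpha$, so the non-identity hypothesis passes to each $L_i$ and Zariski density on each summand yields Zariski density on the sum. For $L=\mathfrak{sl}_2(k)$ itself the statement follows from the description of $w(\mathfrak{sl}_2(k))$ recalled just above the theorem: the image of any non-identity $w$ is either the set of non-nilpotent traceless matrices or all of $\mathfrak{sl}_2(k)$, both Zariski dense.

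So assume $L$ is simple of rank at least $2$. Fix a Cartan $\mathfrak{h}$, a long root $\alpha$, and the $\mathfrak{sl}_2$-triple $\{e_\alpha,h_\alpha,f_\alpha\}$ spanning $\mathfrak{sl}_2^\alpha\subset L$. Using the base case applied to this subalgebra, I pick $(a_1,\dots,a_n)\in(\mathfrak{sl}_2^\alpha)^n$ with $w(a_1,\dots,a_n)=c\,h_\alpha$ for some $c\in k^{\times}$. Expanding $w$ through its partial Lie (Fox-type) derivatives $\partial_iw$, the differential $d_{(a_1,\dots,a_n)}w$ sends $(b_1,\dots,b_n)$ to $\sum_{i=1}^n(\partial_iw)(a_1,\dots,a_n)\cdot b_i$, where each $(\partial_iw)(a_1,\dots,a_n)$ is an element of $\mathcal U(\mathfrak{sl}_2^\alpha)$ acting on $L$ via $\operatorname{ad}$. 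Decomposing $L$ into irreducible $\mathfrak{sl}_2^\alpha$-modules $L=\bigoplus_m V_m$, these operators preserve the decomposition, and surjectivity of $dw$ reduces to the non-vanishing of the induced operators on each $V_m$.

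The main obstacle is exactly this non-vanishing on every isotypic component. On $V_0=\mathfrak{sl}_2^\alpha$ the differential already has dense image by the base case. For the remaining $V_m$ the set of tuples in $L^n$ at which the induced operator on $V_m$ is nonzero is Zariski open, so it suffices to show it is nonempty; this is achieved by perturbing $(a_1,\dots,a_n)$ generically inside $L^n$, relying on the fact that $w$ cannot vanish identically on any non-abelian subalgebra containing $\mathfrak{sl}_2^\alpha$---in particular on the subalgebra generated by $\mathfrak{sl}_2^\alpha$ together with a single root vector $e_\beta$ that carries $V_0$ into $V_m$ under repeated adjoint action. Once the differential is surjective at the perturbed base point, $w$ is dominant there, $w(L^n)$ contains a non-empty Zariski-open subset of $L$, and Zariski density follows.
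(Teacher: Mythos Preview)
The paper is a survey and does not prove this theorem here; it merely cites \cite{BGKP}. So there is no in-paper argument to compare against, and your proposal has to stand on its own.

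Your reduction to the simple case is fine, and the $\mathfrak{sl}_2$ base case is essentially correct (though the trichotomy you invoke from \cite{KBMR2} is stated for \emph{homogeneous} $w$; a word is needed for the general case). The real gap is in the higher-rank step. With the base point $(a_1,\dots,a_n)\in(\mathfrak{sl}_2^\alpha)^n$ you are right that each $P_i=(\partial_i w)(a)$ lies in the image of $U(\mathfrak{sl}_2^\alpha)$ under $\mathrm{ad}$ and therefore respects the decomposition $L=\bigoplus_m V_m$. But for a root $\mathfrak{sl}_2^\alpha$ in a simple $L$ of rank $\ge 2$ this decomposition always has \emph{trivial} summands, for instance $\ker\alpha\subset\mathfrak h$. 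On a trivial summand every $P_i$ acts by its constant term, and if $w$ has no linear part (the only case of interest---otherwise $dw_0$ is already surjective) each $P_i$ lies in the augmentation ideal and hence kills these summands. Thus $dw_a$ is \emph{never} surjective for $a\in(\mathfrak{sl}_2^\alpha)^n$, and the ``open nonempty'' scheme collapses. Your proposed repair, perturbing $a$ into $L^n$, destroys the very $\mathfrak{sl}_2^\alpha$-module structure the paragraph relies on: once $a\notin(\mathfrak{sl}_2^\alpha)^n$ the phrase ``the induced operator on $V_m$'' has no meaning, and ``nonzero'' would not give surjectivity onto $V_m$ anyway. The appeal to ``$w$ cannot vanish identically on any non-abelian subalgebra containing $\mathfrak{sl}_2^\alpha$'' is true but does not touch the differential.

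The argument in \cite{BGKP} circumvents this by bringing in the adjoint $G$-action rather than trying to force surjectivity of $dw$ from a single root $\mathfrak{sl}_2$: the image $w(L^n)$ is $\mathrm{Ad}(G)$-stable, and one combines the $\mathfrak{sl}_2$ base case with dominance of suitable $G$-equivariant auxiliary morphisms, in the spirit of Borel's original proof for groups. If you want to rescue a pure differential approach, you must at minimum use several root $\mathfrak{sl}_2$'s at once (one per simple root, so that their Cartan pieces span $\mathfrak h$) and explain how the corresponding tangent images glue; that gluing is exactly the content your outline is missing.
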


\begin{remark} \label{rem:liealg}
It is  unclear whether it is possible to improve the result by
suppressing the condition on $w(x_1,\ldots,x_n)$ in Theorem
\ref{borel-lie}. Also, the case of word maps evaluated on
non-classical Lie algebras is generally open (see \cite{KBKP} for
further discussions). If the base field $k$ is not algebraically
closed and $L$ is not split, the situation is not understood either.
Even for the simplest word $w=[x,y]$, even for $k=\mathbb R$, there
are only partial results: the corresponding map $L^2\to L$ is
surjective if $L$ is a compact simple algebra \cite{HM}, \cite{Ah}
(D.~Akhiezer \cite{Ah} also treats some non-compact algebras).
Surprisingly, for $w=[x,y]$ there is a very nice result of ``Waring
type'' due to G.~Bergman--N.~Nahlus \cite{BN} who use recent
two-generation theorems by J.-M.~Bois \cite{Boi}: if $L$ is a
finite-dimensional simple Lie algebra defined over any infinite
field of characteristic different from 2 and 3, then every $a\in L$
is a sum of two brackets: $a=[x,y]+[z,t]$. (Over $\mathbb R$, a
simple proof can be found in \cite{HM}; see \cite{Gorde} for the case
of arbitrary classical Lie algebras.) To the best of our
knowledge, there are no examples of simple Lie algebras of infinite
bracket width; moreover, we do not know any example where one
bracket is not enough.

In the case where the base field $k$ is replaced with some ring $R$,
the situation is almost totally unexplored, see \cite{Kun} for
discussion of some cases where $R$ is of arithmetic type.
\end{remark}


\section{Word maps for Kac--Moody groups and algebras. Problems}

Our next aim is to discuss how the results of the previous section
can be treated from the viewpoint of Kac--Moody groups and algebras.
Let $A = (a_{ij})$, $1\leq i,j\leq n$, be a generalized Cartan
matrix, that is, $a_{ii}=2$, $a_{ij}\leq 0$ if $i\neq j$ and
$a_{ij}=0$ if and only if $a_{ji}=0$. Every $A = (a_{ij})$ gives
rise to a complex Kac--Moody algebra $\mathfrak g_A$,  see
\cite{Mo}, \cite{Ka}, etc. According to the type of $A$
(positive-definite, positive-semidefinite, indefinite), we arrive at
finite-dimensional simple Lie algebras, affine Kac--Moody algebras,
and indefinite Kac--Moody algebras, respectively.

\subsection{Minimal (incomplete) Kac--Moody groups. Split case}
Given a generalized Cartan matrix $A$ and a field $k$ (or a ring
$R$), the value $G_A(k)$ of the Tits functor $G_A\colon \mathbb
Z$-$Alg\to Grp$ defines a minimal Kac--Moody group over $k$, see
\cite{Ti} (cf. \cite{Ca}, \cite{MR}). One can view this functor as a
generalization of the Chevalley--Demazure group scheme. We assume
that $A$ is indecomposable. As a rule, we assume that the functor
$G_A$ is simply connected. (However, speaking about the simplicity
of a Kac--Moody group $G_A(k)$ (resp. $k$-algebra $\mathfrak g_A$), we
will freely, often without special mentioning, use common language
abuse, assuming that we go over to its subquotient, taking the
derived subgroup (resp. subalgebra) and factoring out the centre, if
necessary.)

If $A$ is a definite matrix, the group $G_A(k)$ is a Chevalley group
$G_\Phi(k)$ where $\Phi$ is the root system corresponding to $A$.
Word maps arising in this case were considered in the previous
section.

If $A$ is of affine type, $G_A(k)$ is isomorphic to the Chevalley
group $G_\Phi(k[t,t^{-1}])$ where $k[t,t^{-1}]$ is the ring of
Laurent polynomials. Since $G_A(k)$ is a Chevalley group over a
ring, all Borel-type considerations are irrelevant. However, the
width of $G_A(k)$ with respect to natural words is unknown.
\begin{problem}
What is the width of the affine Kac--Moody group $G_A(k) \simeq
G_\Phi(k[t,t^{-1}])$ with respect to commutators? Is this width
finite?
\end{problem}

A comprehensive survey of the widths of the Chevalley groups over
rings can be found in \cite{HSVZ},  \cite{SV}, \cite{Ste}. We shall
quote some facts from these sources. First of all, since
$k[t,t^{-1}]$ is a Euclidean ring, the group $G_\Phi(k[t,t^{-1}])$
is perfect and, moreover, $G_\Phi(k[t,t^{-1}])=E_\Phi(k[t,t^{-1}])$
where $E_\Phi(k[t,t^{-1}])$ is the elementary subgroup. Further, it
is worth taking into account an unexpected  result from \cite{HSVZ}
which states that finite width in commutators is equivalent to
bounded generation in elementary unipotent elements (see \cite{Mr}
for a survey of the latter question). Furthermore, there is an
example by R.~K.~Dennis and L.~Vaserstein:
\begin{theorem}[\cite{DV}]
The group $SL(3,\mathbb {C}[t])$ does not have finite width with
respect to commutators.
\end{theorem}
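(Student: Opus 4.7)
My plan is to leverage the equivalence recalled just above from \cite{HSVZ}: for a Chevalley group over a commutative ring, finite commutator width is equivalent to bounded generation by elementary unipotent subgroups. Since $\mathbb{C}[t]$ is Euclidean, $SL_3(\mathbb{C}[t])=E_3(\mathbb{C}[t])$, so it suffices to prove the apparently simpler statement that there is no integer $N$ such that every element of $SL_3(\mathbb{C}[t])$ can be written as a product of at most $N$ elementary matrices $e_{ij}(f)$, with $f\in\mathbb{C}[t]$ and $i\neq j$.

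To obstruct bounded elementary generation I would use a length function coming from the valuation at infinity. Let $K=\mathbb{C}((t^{-1}))$ with ring of integers $\mathcal{O}=\mathbb{C}[[t^{-1}]]$, and let $\mathcal{B}$ be the affine Bruhat--Tits building of $SL_3(K)$, on which $SL_3(\mathbb{C}[t])\subset SL_3(K)$ acts. Fix the vertex $x_0\in\mathcal{B}$ stabilized by $SL_3(\mathcal{O})$ and let $d$ denote the gallery distance. A direct computation shows $d(x_0,e_{ij}(f)\cdot x_0)\leq C(1+\deg f)$, so any product of $N$ elementary factors $e_{i_k j_k}(f_k)$ displaces $x_0$ by at most $CN(1+\max_k \deg f_k)$.

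As witnesses I would choose a sequence $g_N\in SL_3(\mathbb{C}[t])$ whose Iwahori position in $\mathcal{B}$ lies at distance $\geq cN$ from $x_0$. One concrete recipe is to conjugate a fixed short polynomial unipotent by a long coroot translation realized by a polynomial matrix of bounded $t$-degree, so that $g_N$ acts on $\mathcal{B}$ as a long translation of order $N$ along a fixed apartment.

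The main obstacle is that the displacement estimate only controls the product $N\cdot\max_k\deg f_k$, not $N$ alone: a short product of very high degree factors could a priori realize the long translation just as well as a long product of bounded factors. Overcoming this requires a refined invariant --- for instance a metric on Iwahori--Weyl types, or a cocycle on $SL_3(K)$ sensitive to the commutator structure of affine Weyl translations --- that bounds the number of elementary factors needed to realize a given translation independently of the degrees of the $f_k$. This refinement, which couples the geometry of $\mathcal{B}$ with the structure of the Steinberg group over $\mathbb{C}[t]$, is the technical heart of the Dennis--Vaserstein argument and the step requiring the most care.
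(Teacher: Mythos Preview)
First, a point of comparison: the paper does \emph{not} prove this theorem. It is simply quoted as a result from \cite{DV}, with no argument supplied. So there is no ``paper's own proof'' to measure your proposal against; what you have written is an attempt to reconstruct the Dennis--Vaserstein argument, and should be judged on its own merits.

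On those merits, your proposal is not a proof but a sketch with a gap you yourself identify and do not close. The reduction to bounded elementary generation (via \cite{HSVZ} and Euclideanity of $\mathbb{C}[t]$) is fine. But the building displacement argument, as you correctly note, controls only $N\cdot\max_k\deg f_k$: a single elementary factor $e_{ij}(f)$ with $\deg f$ large already acts on $\mathcal{B}$ as a long translation, so any element reachable by a long product of bounded-degree elementaries is also reachable by a short product of high-degree ones. The displacement function on $\mathcal{B}$ therefore gives no lower bound on the \emph{number} of elementary factors. Your proposed ``refined invariant'' is left entirely unspecified, and your assertion that such a refinement ``is the technical heart of the Dennis--Vaserstein argument'' is not accurate: their paper is $K$-theoretic and algebraic, not building-geometric, and the unbounded-generation input for $SL_3(\mathbb{C}[t])$ traces back to van der Kallen's argument, which proceeds by dimension and constructibility considerations for the product maps on the ind-variety $SL_3(\mathbb{C}[t])$, not by metric geometry on $\mathcal{B}$.

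In short: the reduction step is correct, but the geometric obstruction you propose does not exist in the form you describe, and the missing ``refinement'' is not a detail but the entire content of the result. If you want to complete the argument along algebraic lines, look at van der Kallen's short note on $SL_3(\mathbb{C}[X])$ rather than at Bruhat--Tits geometry.
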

\noindent This example shows that the commutator width for Chevalley
groups over polynomial rings with coefficients from $\mathbb {C}$ is
infinite. It also prevents from predictions in the case of Laurent
polynomials, without clarifying  the situation with usual polynomial
rings.

\begin{remark}
In general, the question whether a Chevalley group $G_\Phi(k[t])$
over a polynomial ring with coefficients in $k$ has a finite
commutator width should depend on the choice of the  ground field
$k$. Most likely, for fields of infinite transcendence degree over
the prime subfield the negative answer looks plausible. To the
contrary, for finite fields an affirmative answer sounds quite
reasonable. We quote \cite{Ste}: ``It is amazing that the answer is
unknown already for $SL_n(F[x])$, where $F$ is a finite field or a
number field'' and thank A.~Stepanov for the whole remark.
\end{remark}

Let now $G_A(k)$ be a Kac--Moody group of indefinite type. All these
groups are perfect for fields of size $>3$. 
Moreover, B.~R\'emy \cite{Remy1} and P.-E.~Caprace--B.~R\'emy \cite{CR}
showed that the minimal indefinite adjoint Kac--Moody groups
$G_A(F_q)$ are simple provided $q>n>2$ for every $A$ and for $n=2$ for
some $A$, J.~Morita and B.~R\'emy \cite{MoR} proved that in the case
where $k$ is the algebraic closure of $\mathbb F_q$ these groups are simple.
However, the simplicity problem for minimal groups over other fields
is open.

P.-E.~Caprace and K.~Fujiwara \cite{CF} showed that over finite
fields these (infinite) simple groups have infinite commutator
width. In general, especially over the $\mathbb C$ and $\mathbb R$,
problems on word maps are not yet settled.

\begin{problem}
Study the image of word maps for minimal non-affine Kac--Moody groups
over various fields. In particular, what can be said about the
commutator width and power width? Are there some density properties?
Are there analogues of results of Waring type for arbitrary words?
\end{problem}

The first step is to check what tools that proved to be useful
in finite-dimensional case can be extended to Kac--Moody setting.
This is not completely hopeless, as is shown in the papers
\cite{MP}, \cite{MP1}, where this was done for Gauss decomposition.

\medskip

\subsection{Maximal (complete) Kac--Moody groups. Split case}

Let $G_A(k)$ be an incomplete Kac--Moody group. There are several
ways to complete this group with respect to an appropriate topology.
Different methods of completions lead to very similar groups
(R\'emy--Ronan groups \cite{RR}, Mathieu groups \cite{Math},
Carbone--Garland groups \cite{CG}) whose distinctions are not very
important for our goals. It is known that the R\'emy--Ronan group is
smallest among them and can be obtained as a quotient of others. So,
once we are talking about a simple (indefinite) Kac--Moody complete
group we can assume that this is one of the incarnations of the
R\'emy--Ronan group. Denote it by $\overline G_A(k)$.
This 
group is simple (both as a topological group and an abstract group),
see \cite{Mo1}, \cite{KP}, \cite{Ga}, \cite{CER}, \cite{Mr1},
\cite{Rou1}, \cite{CR1}, \cite{Remy1}, etc.

Let first $\overline G_A(k)$ be a complete affine Kac--Moody group.
Then $\overline G_A(k)$ is isomorphic to a Chevalley group of the
form $G_\Phi(k((t)))$ where $k((t))$ is the field of formal Laurent
series over $k$. Thus, $\overline G_A(k)$ modulo centre is a simple
algebraic group. Here is an immediate corollary of Borel type:

\begin{corollary}\label{cor:aff}
Let $\overline G_A(k)$ be a complete adjoint affine Kac--Moody
group, and let $w=w(x_1,\dots ,x_n)$ be an arbitrary non-trivial
word. Then the image of $w\colon \overline G_A(k)^n\to \overline
G_A(k)$ is Zariski dense.
\end{corollary}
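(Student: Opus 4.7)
The plan is to reduce this to Borel's theorem (Theorem \ref{th:borel}) via the structural identification stated just before the corollary. Specifically, under our hypotheses the complete adjoint affine Kac--Moody group $\overline G_A(k)$ is isomorphic to the Chevalley group $G_\Phi(K)$ over the field $K = k((t))$ of formal Laurent series, where $\Phi$ is the finite root system underlying the affine Cartan matrix $A$. Since $G_A$ is taken adjoint, $G_\Phi$ is a split connected semisimple adjoint linear algebraic group defined over $K$.

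With this identification in hand, I would apply Borel's theorem directly with base field $K = k((t))$: for every non-trivial word $w(x_1,\dots,x_n)$, the image of the map $w\colon G_\Phi(K)^n \to G_\Phi(K)$ is Zariski dense in $G_\Phi$ regarded as a $K$-variety. Transporting back along the isomorphism yields the asserted Zariski density of $w\bigl(\overline G_A(k)^n\bigr)$ in $\overline G_A(k)$. The argument really is a one-liner once the dictionary is in place, which is why the text calls it ``immediate''.

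The only point that requires any care --- and it is the main (mild) obstacle --- is specifying what ``Zariski dense'' means in $\overline G_A(k)$ itself: one must pull back the Zariski topology from the $K$-variety $G_\Phi$ via $\overline G_A(k) \simeq G_\Phi(K)$. This is the natural convention in the affine case, where the completion reproduces a familiar algebraic group over a local field, so no input from Kac--Moody-specific machinery (twin BN-pairs, root data at infinity, pro-unipotent radicals of parabolics) is needed. Under this convention the corollary is a formal consequence of Theorem \ref{th:borel}, and no separate treatment of particular word classes (power, commutator, Engel) is required.
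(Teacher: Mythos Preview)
Your proposal is correct and matches the paper's approach exactly: the paper presents this as an ``immediate corollary of Borel type'' after noting that $\overline G_A(k)\simeq G_\Phi(k((t)))$ is a Chevalley group over the field of formal Laurent series, so Theorem~\ref{th:borel} applies directly. Your added remark clarifying that the Zariski topology is the one transported from the $K$-variety $G_\Phi$ is a helpful elaboration of what the paper leaves implicit.
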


Note that the field  $K=k((t))$ is not algebraically closed.
Thus, it is not hard to prove that, say, if $w=x^n$ is a power word,
then the induced word map is not surjective, both for $G=SL_2$
defined over $k((t))$ and $k[t,t^{-1}]$: it is enough to look at the
equation $x^n=diag(t,t^{-1})$.

Further, Corollary \ref{cor:aff} does not provide estimates for
particular $wd_w(G_\Phi(K))$. However, recently Hui--Larsen--Shalev
\cite{HLS} proved that for an arbitrary $w$ we have
$wd_w(G_\Phi(K))\leq 4$ in every simple Chevalley group over any
infinite field $K$.  Thus, $wd_w(\overline G_A(k))\leq 4$. In a
similar spirit, from \cite{HLS} it follows that the width of
$\overline G_A(k)$ in squares equals 2. So, one can ask the
following question:

\begin{question}
What is the commutator width of $\overline G_A(k)$?
\end{question}

Let now $\overline G_A(k)$ be an arbitrary complete Kac--Moody
group. The problem whether a suitable variant of Borel's theorem is
valid for $\overline G_A(k)$ is among the most intriguing ones. For
example, let us view $\overline G_A(k)$ as a simple algebraic
ind-scheme \cite{Ku}, \cite{Math}, \cite{Rou} (see also \cite{AT},
\cite{KP1}, etc.). Then $\overline G_A(k)$ is endowed with a
Zariski-type topology \cite{Sha1}, \cite{Sha2}, \cite{Kam}. One of
the possible approaches is to test the image of a word map in this
topology.
\begin{problem}
Let $\overline G_A(k)$ be a complete non-affine Kac--Moody group,
and let $w=w(x_1,\dots ,x_n)$ be an arbitrary non-trivial word. Is
it true that $w((\overline G_A(k)^n)$ is dense in a Zariski
topology?
\end{problem}

Here are some related questions.

\begin{questions}
Is there a bound for $wd_w(\overline G_A(k))$? What is the
commutator width of $\overline G_A(k)$? How big is the image
$w((\overline G_A(k)^n)$ in the natural topology of $\overline
G_A(k)$?
\end{questions}

In a more general context, one can try to investigate an eventual
gap between the density and surjectivity properties:

\begin{question}
Do there exist a locally compact topological group $G$, simple at
least as a topological group, and a word $w=w(x_1,\dots ,x_n)$
non-representable as a proper power of another word such that the
corresponding word map $w\colon G^n\to G$ is not surjective but the
image of $w$ is dense?
\end{question}

Kac--Moody groups seem to be a sufficiently rich testing ground for
such sort of questions.

\medskip

\subsection{Forms of Kac--Moody groups} The theory of forms for
Kac--Moody groups is mainly developed in  \cite{Remy2},
\cite{Remy3}, \cite{Rou}, \cite{Rou1}, \cite{Rou2}, \cite{Hee},
\cite{Ra1}, \cite{Ra2}, etc.  All questions stated before in the
split case also make sense for twisted forms.  In particular, for
affine real forms classified in \cite{BMR}, this is related to
commutator widths in twisted Chevalley groups over the ring of Laurent
polynomials $R[t,t^{-1}]$. Another question to study in the compact case is the
phenomenon of Thom's sequence (see Section 1): can it occur for word
maps on any compact Kac--Moody group?

The case of integer forms of Kac--Moody groups, which were
extensively studied both from arithmetic viewpoint (Eisenstein
series, Langlands program) and with an eye towards applications in
mathematical physics (superstrings, supergravity), see, e.g.,
\cite{Ga1}, \cite{CG}, \cite{BC} and the references therein, is even
less explored. Any question on the word width of such groups seems
challenging. In a more group-theoretic spirit, one can mention
the papers of D.~Allcock and L.~Carbone \cite{Al}, \cite{AC}, who
established some finite presentability results in the affine and
hyperbolic cases.

\medskip

\subsection{Kac--Moody algebras}
As in Section 1, one can ask questions on the image of the map induced by a
Lie polynomial, as well as questions of width type, for Kac--Moody
algebras, {\it mutatis mutandis}. For example, it seems natural to
proceed in the spirit of Remark \ref{rem:liealg} and look at the
bracket width of simple (subquotients of) algebras $\mathfrak g_A$. Can it happen
that such an algebra is of infinite bracket width? Of bracket width greater than one?

In the case that for some simple algebra the bracket map turns out to be
surjective, one can study $n$-Engel maps ($n\ge 2$) and maps induced
by general multilinear Lie polynomials.


Another problem is to find out whether some counterpart of
Borel-type Theorem \ref{borel-lie} exists in Kac--Moody setting.

\bigskip

\noindent {\it Acknowledgements}. This research was supported by the
Israel Science Foundation, grant 1207/12. A substantial part of this
work was done during Klimenko's visits to the MPIM (Bonn) and
Bar-Ilan University. Kunyavski\u\i \ and Plotkin were supported in
part by the Minerva Foundation through the Emmy Noether Research
Institute of Mathematics. Morita was partially supported by the
Grants-in-Aid for Scientific Research (Monkasho Kakenhi) in Japan.
Support of these institutions is thankfully acknowledged.

Some questions raised in this paper were formulated after
discussions with N.~L.~Gordeev, L.~H.~Rowen, A.~Shalev,
A.~V.~Stepanov, N.~A.~Vavilov and E.~I.~Zelmanov to whom we are
heartily grateful.

\bigskip

{\it Three co-authors greet Jun Morita on his 60th birthday and wish him good health and activity.}



\begin{thebibliography}{999}

\bibitem{AT}
E.~Abe, M.~Takeuchi, {\it Groups associated with some types of
infinite-dimensional Lie algebras}, J. Algebra {\bf 146} (1992),
385--404.

\bibitem{Ah}
D.~Akhiezer, {\it On the commutator map for real semisimple Lie
algebras}, preprint, \url{arXiv:1501.02934v1} [math.RA], 2015, 6~pp.

\bibitem{Al}
D. Allcock, {\it Presentation of affine Kac--Moody groups over rings},
preprint, \url{arXiv:1409.0176v2} [math.GR], 2014, 22~pp.

\bibitem{AC}
D. Allcock, L. Carbone, {\it Presentation of  hyperbolic Kac--Moody
groups over rings}, preprint, \url{arXiv:1409.5918v1} [math.GR], 2014, 12~pp.

\bibitem{Ba1}
T.~Bandman, {\it Surjectivity of certain word maps on
$\PSL(2,\mathbb C)$ and $\SL(2,\mathbb C)$}, preprint,
\url{arXiv:1407.3447v2} [math.GR], 2014, 17 pp.

\bibitem{BBGKP}
T.~Bandman, M.~Borovoi, F.~Grunewald, B.~Kunyavski\u\i, E.~Plotkin,
{\it Engel-like characterization of radicals in finite dimensional
Lie algebras and finite groups}, Manuscripta Math. {\bf 119} (2006),
465--481.

\bibitem{BGG}
T.~Bandman, S.~Garion, F.~Grunewald, {\it On the surjectivity of
Engel words on $\PSL(2,q)$}, Groups Geom. Dyn. {\bf 6} (2012),
409--439.

\bibitem{BGaK}
T.~Bandman, S.~Garion, B.~Kunyavski\u\i, {\it Equations in simple
matrix groups: algebra, geometry, arithmetic, dynamics}, Cent. Eur.
J. Math. {\bf 12} (2014), 175--211.

\bibitem{BGKP}
T.~Bandman, N.~Gordeev, B.~Kunyavski\u\i, E.~Plotkin, {\it Equations
in simple Lie algebras}, J. Algebra {\bf 355} (2012), 67--79.

\bibitem{BGGKPP}
T.~Bandman, G.-M.~Greuel, F.~Grunewald, B.~Kunyavski\u\i,
G.~Pfister, E.~Plotkin, {\it Identities for finite solvable groups
and equations in finite simple groups}, Compos. Math. {\bf 142}
(2006), 734--764.

\bibitem{BGKJ}
T.~Bandman, F.~Grunewald, B.~Kunyavski\u\i \ (with an appendix by
N.~Jones), {\it Geometry and arithmetic of verbal dynamical systems
on simple groups}, Groups Geom. Dyn. {\bf 4} (2010), 607--655.

\bibitem{BC}
L. Bao, L. Carbone, {\it Integral forms of Kac--Moody groups and
Eisenstein series in low dimensional supergravity theories},
preprint, \url{arXiv:1308.6194v1} [hep-th], 2013, 46~pp.

\bibitem{Ber}
A.~F.~Beardon, {\it The geometry of discrete groups}, Graduate Texts
in Mathematics, vol.~91, Springer-Verlag, New York, 1983, xii+337
pp.

\bibitem{BMR}
H.~Ben Messaoud, G.~Rousseau, {\it Classification des formes
r\'eelles presque compactes des alg\`ebres de Kac--Moody affines},
J. Algebra {\bf 267} (2003), 443--513.

\bibitem{BN}
G. M. Bergman, N. Nahlus, {\it Homomorphisms on infinite direct product
algebras, especially Lie algebras}, J. Algebra {\bf 333} (2011), 67--104.

\bibitem{Boi}
J.-M. Bois, {\it Generators of simple Lie algebras in arbitrary
characteristics}, Math. Z. {\bf 262} (2009), 715--741.

\bibitem{Bo}
A.~Borel, {\it On free subgroups of semisimple groups}, Enseign.
Math. {\bf 29} (1983), 151--164; reproduced in {\OE}uvres: Collected
Papers, vol.~IV, Springer-Verlag, Berlin--Heidelberg, 2001, 41--54.

\bibitem{BWW}
J.~N.~Bray, J.~S.~Wilson, R.~A.~Wilson, {\it A characterization of
finite soluble groups by laws in two variables}, Bull. London Math.
Soc. {\bf 37} (2005), 179--186.

\bibitem{Br}
G.~Brown, {\it On commutators in a simple Lie algebra}, Proc. Amer.
Math. Soc. {\bf 14} (1963), 763--767.

\bibitem{CF}
P.-E.~Caprace, K.~Fujiwara, {\it Rank-one isometries of buildings
and quasi-morphisms of Kac--Moody groups}, Geom. Funct. Anal. {\bf
19} (2010), 1296--1319.

\bibitem{CR1}
P.-E.~Caprace, B.~R\'emy, {\it Simplicity and superrigidity of twin
building lattices}, Invent. Math. {\bf 176} (2009), 169--221.

\bibitem{CR}
P.-E.~Caprace, B.~R\'emy, {\it  Simplicity of twin tree lattices
with non-trivial commutation relations}, preprint,
\url{arXiv:1209.5372} [math.GR], 2012, 7 pp., to appear in Proc.
Ohio State U. Special Year on Geometric Group Theory, Springer LNM.

\bibitem{CER}
L.~Carbone, M.~Ershov, G.~Ritter, {\it Abstract simplicity of
complete Kac--Moody groups over finite fields}, J. Pure Appl.
Algebra {\bf 212} (2008), 2147--2162.

\bibitem{CG}
L.~Carbone, H.~Garland, {\it Existence of lattices in Kac--Moody
groups over finite fields}, Commun. Contemp. Math. {\bf 5} (2003),
813--867.

\bibitem{CG1}
L. Carbone, H. Garland, {\it Infinite dimensional Chevalley groups and
Kac--Moody groups over $\mathbb Z$}, preprint, 2013, 23 pp., available at
\url{http://www.math.rutgers.edu/~carbonel}

\bibitem{Ca}
R.~Carter, {\it Kac--Moody groups and their automorphisms}, Groups,
Combinatorics and Geometry (Durham, 1990) (M.~Liebeck, J.~Saxl,
eds.), London Math. Soc. Lecture Note Ser., vol.~165, Cambridge
Univ. Press, Cambridge, 1992, 218--228.

\bibitem{Cha1}
P.~Chatterjee, {\it On the surjectivity of the power maps of
algebraic groups in characteristic zero}, Math. Res. Lett. {\bf 9}
(2002), 741--756.

\bibitem{Cha2}
P.~Chatterjee, {\it On the surjectivity of the power maps of
semisimple algebraic groups}, Math. Res. Lett. {\bf 10} (2003),
625--633.

\bibitem{Cha3}
P.~Chatterjee, {\it On the power maps, orders and exponentiality of
$p$-adic algebraic groups}, J. reine angew. Math. {\bf 629} (2009),
201--220.

\bibitem{Cha4}
P.~Chatterjee, {\it Surjectivity of power maps of real algebraic
groups}, Adv. Math. {\bf 226} (2011), 4639--4666.

\bibitem{CE}
L.~P.~Comerford, C.~C.~Edmunds, {\it Solutions of equations in free
groups}, Group Theory (Singapore, 1987) (K.~N.~Cheng, Y.~K.~Leong,
eds.), de Gruyter, Berlin, 1989, 347--356.

\bibitem{DV}
R.~K.~Dennis, L.~N.~Vaserstein, {\it Commutators in linear groups},
$K$-Theory {\bf 2} (1989), 761--767.


\bibitem{ET}
A.~Elkasapy, A.~Thom, {\it About Got\^o's method showing
surjectivity of word maps}, Indiana Univ. Math. J. {\bf 63} (2014),
1553--1565.

\bibitem{EG}
E.~W.~Ellers, N.~Gordeev, {\it On the conjectures of J.~Thompson and
O.~Ore}, Trans. Amer. Math. Soc. {\bf 350} (1998), 3657--3671.

\bibitem{FGMRS}
B.~Fine, A.~Gaglione, A.~Myasnikov, G.~Rosenberger, D.~Spellman,
{\it The Elementary Theory of Groups. A Guide through the Proofs of
the Tarski Conjectures}, de Gruyter Expositions in Mathematics,
vol.~60, de Gruyter, Berlin, 2014, xiv+307 pp.

\bibitem{Ga1}
H. Garland, {\it The arithmetic theory of loop algebras}, J. Algebra
{\bf 53} (1978), 480--551; erratum {\it ibid.} {\bf 63} (1980), 285.

\bibitem{Ga}
H.~Garland, {\it The arithmetic theory of loop groups}. Inst. Hautes
\'Etudes Sci. Publ. Math. {\bf 52} (1980), 5--136.


\bibitem{Gorde}
N.~Gordeev, {\it Sums of orbits of algebraic groups I}, J. Algebra
{\bf 295} (2006), 62--80.

\bibitem{Gorde1}
N.~Gordeev, {\it On Engel words on simple algebraic groups}, J.
Algebra {\bf 425}  (2015), 215--244.

\bibitem{Go}
M.~Got\^o, {\it A theorem on compact semi-simple groups},
J. Math. Soc. Japan {\bf 1} (1949), 270--272.

\bibitem{GrK1}
R.~I.~Grigorchuk, P.~F.~Kurchanov, {\it On the width of elements in
free groups} (Russian), Ukrain. Mat. Zh. {\bf 43} (1991), no.~7--8,
911--918; English transl. in Ukrainian Math. J. {\bf 43} (1991), no.
7--8, 850--856.

\bibitem{GrK2}
R.~I.~Grigorchuk, P.~F.~Kurchanov, {\it Some questions of group
theory related to geometry}, Encyclopaedia Math. Sci., vol.~58,
Algebra, VII, Springer, Berlin, 1993, 167--232, 233--240.

\bibitem{GKP}
F. Grunewald, B. Kunyavski\u\i, E. Plotkin, {\it Characterization of
solvable groups and solvable radical}, Internat. J. Algebra Comput.
{\bf 23} (2013), 1011--1062.

\bibitem{HSVZ}
R.~Hazrat, A.~Stepanov, N.~Vavilov, Z.~Zhang, {\it Commutator width
in Chevalley groups}, Note Mat. {\bf 33} (2013), 139--170.

\bibitem{Hee}
J.-Y.~H\'ee, {\it Sur la torsion de Steinberg--Ree des groupes de
Chevalley et de Kac--Moody}, Th\`ese d'Etat Univ. Paris 11, 1993.

\bibitem{HM}
K. H. Hofmann, S. A. Morris, {\it The Lie Theory of Connected Pro-Lie
Groups. A Structure Theory for Pro-Lie Algebras, Pro-Lie Groups, and
Connected Locally Compact Groups}, EMS Tracts Math., vol.~2, Eur.
Math. Soc., Z\"urich, 2007, xvi+678 pp.

\bibitem{HLS}
C.-Y.~Hui, M.~Larsen, A.~Shalev, {\it The Waring problem for Lie
groups and Chevalley groups}, preprint, \url{arXiv:1404.4786v1}
[math.GR], 2014, 17 pp., to appear in Israel J. Math.

\bibitem{JS}
E.~Jaligot, Z.~Sela, {\it Makanin--Razborov diagrams over free
products}, Illinois J. Math. {\bf 54} (2010), 19--68.

\bibitem{JLO}
S.~Jambor, M.~W.~Liebeck, E.~A.~O'Brien, {\it Some word maps that
are non-surjective on infinitely many finite simple groups},  Bull.
London Math. Soc. {\bf 45} (2013), 907--910.

\bibitem{Ka}
V.~G.~Kac, {\it Infinite-dimensional Lie Algebras}, 3rd ed.,
Cambridge Univ. Press, Cambridge, 1990, xxii+400 pp.

\bibitem{KP1}
V.~G.~Kac, D.~H.~Peterson, {\it Regular functions on certain
infinite-dimensional groups}, Arithmetic and Geometry, Vol. II
(M.~Artin, J.~Tate, eds.), Progr. Math., vol. 36, Birkh\"auser,
Boston, MA, 1983, 141--166.

\bibitem{KP}
V.~G.~Kac, D.~H.~Peterson,  {\it Defining relations of certain
infinite-dimensional groups}, The Mathematical Heritage of \'Elie
Cartan (Lyon, 1984), Ast\'erisque 1985, Num\'ero Hors S\'erie,
165--208.

\bibitem{Kam}
T.~Kambayashi, {\it Some basic results on pro-affine algebras and
ind-affine schemes}, Osaka J. Math. {\bf 40} (2003), 621--638.

\bibitem{KBKP}
A.~Kanel-Belov, B.~Kunyavski\u\i, E.~Plotkin, {\it Word equations in
simple groups and polynomial equations in simple algebras}, Vestnik
St. Petersburg Univ. Math. {\bf 46} (2013), no.~1, 3--13.

\bibitem{KBMR1}
A.~Kanel-Belov, S.~Malev, L.~Rowen, {\it The images of
non-commutative polynomials evaluated on $2\times 2$ matrices},
Proc. Amer. Math. Soc. {\bf 140} (2012), 465--478.

\bibitem{KBMR2}
A.~Kanel-Belov, S.~Malev, L.~Rowen, {\it The images of Lie
polynomials evaluated on $2\times 2$ matrices over an algebraically
closed field}, preprint, 2015.

\bibitem{KN}
M.~Kassabov, N.~Nikolov, {\it Words with few values in finite simple
groups},  Quart. J. Math. {\bf 64} (2013), 1161--1166.

\bibitem{KhM1}
O.~Kharlampovich, A.~Myasnikov, {\it Irreducible affine varieties
over a free group. I. Irreducibility of quadratic equations and
Nullstellensatz}, J. Algebra {\bf 200} (1998), 472--516.

\bibitem{KhM2}
O.~Kharlampovich, A.~Myasnikov, {\it Irreducible affine varieties
over a free group. II. Systems in triangular quasi-quadratic form
and description of residually free groups}, J. Algebra {\bf 200}
(1998), 517--570.

\bibitem{KhM3}
O.~Kharlampovich, A.~Myasnikov, {\it Algebraic geometry over free
groups: lifting solutions into generic points},  Groups, Languages,
Algorithms (A.~V.~Borovik, ed.), Contemp. Math., vol.~378, Amer.
Math. Soc., Providence, RI, 2005, 213--318.

\bibitem{KhM4}
O.~Kharlampovich, A.~Myasnikov, {\it Elementary theory of free
non-abelian groups}, J. Algebra {\bf 302} (2006), 451--552.

\bibitem{KhM5}
O.~Kharlampovich, A.~Myasnikov, {\it Equations and fully residually
free groups}, Combinatorial and Geometric Group Theory
(O.~Bogopolski {\it et al.}, eds.), Trends Math.,
Birkh\"auser/Springer Basel AG, Basel, 2010, 203--242.

\bibitem{KhM6}
O.~Kharlampovich, A.~Myasnikov, {\it Equations and Algorithmic
Problems in Groups}, Publica\c{c}\~{o}es Matem\'aticas do IMPA.
[IMPA Mathematical Publications] XX Escola de \'Algebra. [XX School
of Algebra] Instituto Nacional de Matem\'atica Pura e Aplicada
(IMPA), Rio de Janeiro, 2008, ii+37 pp.

\bibitem{Ku}
S.~Kumar, {\it Kac--Moody Groups, Their Flag Varieties and
Representation Theory}, Progr. Math., vol.~204, Birkh\"auser,
Boston, MA, 2002, xvi+606 pp.

\bibitem{Kun}
B. Kunyavski\u\i, {\it Equations in matrix groups and algebras over
number fields and rings: prolegomena to a lowbrow noncommutative
Diophantine geometry}, Arithmetic and Geometry (L.~Dieulefait
{\it et al.}, eds.), London Math. Soc. Lecture Note Ser., vol.~420,
Cambridge Univ. Press, Cambridge, 2015, 264--282.

\bibitem{Kun1}
B.~Kunyavski\u\i, {\it Complex and real geometry of word equations
in simple matrix groups and algebras}, preprint, 2015, 10 pp.

\bibitem{La}
M.~Larsen, {\it Word maps have large image}, Israel J. Math. {\bf
139} (2004), 149--156.

\bibitem{LS1}
M.~Larsen, A.~Shalev, {\it Word maps and Waring type problems}, J.
Amer. Math. Soc. {\bf 22} (2009), 437--466.

\bibitem{LST1}
M.~Larsen, A.~Shalev, P.~H.~Tiep, {\it The Waring problem for finite
simple groups}, Ann. Math. {\bf 174} (2011), 1885--1950.

\bibitem{Le}
M.~Levy, {\it Word maps with small image in simple groups},
preprint, \url{arXiv:1206.1206v1} [math.GR], 2012, 5 pp.

\bibitem{LOST1}
M.~W.~Liebeck, E.~A.~O'Brien, A.~Shalev, P.~H.~Tiep, {\it The Ore
conjecture}, J. Eur. Math. Soc. {\bf 12} (2010), 939--1008.

\bibitem{LOST2}
M.~W.~Liebeck, E.~A.~O'Brien, A.~Shalev, P.~H.~Tiep, {\it
Commutators in finite quasisimple groups}, Bull. Lond. Math. Soc.
{\bf 43} (2011), 1079--1092.

\bibitem{Mak}
G.~S.~Makanin, {\it Equations in a free group} (Russian), Izv. Akad.
Nauk SSSR Ser. Mat. {\bf 46} (1982), 1199--1273; English transl. in:
Math. USSR-Izv. {\bf 21} (1983), 483--546.

\bibitem{Mal}
A.~I.~Mal'cev, {\it On the equation
$zxyx^{-1}y^{-1}z^{-1}=aba^{-1}b^{-1}$ in a free group}. (Russian)
Algebra i Logika Sem. {\bf 1} (1962), no. 5, 45--50.

\bibitem{Mall}
G. Malle, {\it The proof of Ore's conjecture [after Ellers--Gordeev and
Liebeck--O'Brien--Shalev--Tiep]},  S\'em. Bourbaki, Ast\'erisque {\bf 361}
(2014), Exp.~1069, 325--348.

\bibitem{Mr1}
T.~Marquis, {\it  Abstract simplicity of locally compact Kac--Moody
groups}. Compos. Math. {\bf 150} (2014), 713--728.

\bibitem{MZ}
C.~Martinez, E.~Zelmanov, {\it Products of powers in finite simple
groups}, Israel J. Math. {\bf 96} (1996), 469--479.

\bibitem{Math}
O.~Mathieu, {\it Construction d'un groupe de Kac--Moody et
applications}, Compos. Math. {\bf 69} (1989), 37--60.

\bibitem{Mo}
R.~V.~Moody, {\it A new class of Lie algebras}, J. Algebra {\bf 10}
(1968), 211--230.

\bibitem{Mo1}
R.~V.~Moody, {\it A simplicity theorem for Chevalley groups defined
by generalized Cartan matrices}, preprint, 1982.

\bibitem{Mor}
J.~Morita, {\it On adjoint Chevalley groups associated with
completed Euclidean Lie algebras}, Comm. Algebra {\bf 12} (1984),
673--690.

\bibitem{MP}
J.~Morita, E.~Plotkin, {\it Gauss decompositions of Kac--Moody
groups}, Comm. Algebra {\bf 27} (1999), 465--475.

\bibitem{MP1}
J.~Morita, E.~Plotkin, {\it Prescribed Gauss decompositions for
Kac--Moody groups over fields}, Rend. Sem. Mat. Univ. Padova
{\bf 106} (2001), 153--163.

\bibitem{MR}
J.~Morita, U.~Rehmann, {\it A Matsumoto-type theorem for Kac--Moody
groups}, Tohoku Math. J. {\bf 42} (1990), 537--560.

\bibitem{MoR}
J.~Morita, B.~R\'emy, {\it Simplicity of some twin tree automorphism
groups with trivial commutation relations}, Canad. Math. Bull. {\bf
57} (2014), 390--400.

\bibitem{Mr}
D.~Morris, {\it Bounded generation of $\SL(n,A)$ (after D.~Carter,
G.~Keller, and E.~Paige)}, New York J. Math. {\bf 13} (2007),
383--421.

\bibitem{MN}
A.~Myasnikov, A.~Nikolaev, {\it Verbal subgroups of hyperbolic
groups have infinite width}, J. Lond. Math. Soc. {\bf 90} (2014),
573--591.

\bibitem{NS}
N.~Nikolov, D.~Segal, {\it Powers in finite groups}, Groups Geom.
Dyn. {\bf 5} (2011), 501--507.

\bibitem{Ol}
A. Yu. Ol'shanski\u\i , {\it Almost every group is hyperbolic}, Internat.
J. Algebra Comput. {\bf 2} (1992), 1--17.

\bibitem{Or}
O.~Ore, {\it Some remarks on commutators}, Proc. Amer. Math. Soc.
{\bf 2} (1951), 307--314.

\bibitem{Ra1}
J.~Ramagge, {\it On certain fixed point subgroups of affine
Kac--Moody groups}, J. Algebra {\bf 171} (1995), 473--514.

\bibitem{Ra2}
J.~Ramagge, {\it A realization of certain affine Kac--Moody groups
of types II and III}, J. Algebra {\bf 171} (1995), 713--806.

\bibitem{Raz}
A.~A.~Razborov, {\it Systems of equations in a free group} (Russian),
Izv. Akad. Nauk SSSR Ser. Mat. {\bf 48} (1984), 779--832; English
transl. in: Math. USSR-Izv. {\bf 25} (1985), 115--162.


\bibitem{Ree}
R.~Ree, {\it Commutators in semi-simple algebraic groups}, Proc.
Amer. Math. Soc. {\bf 15} (1964), 457--460.

\bibitem{Remy2}
B.~R\'emy, {\it Groupes de Kac--Moody d\'eploy\'es et presque
d\'eploy\'es}, Ast\'erisque No. {\bf 277} (2002), viii+348 pp.

\bibitem{Remy3}
B.~R\'emy, {\it Kac--Moody groups: split and relative theories.
Lattices}, Groups: Topological, Combinatorial and Arithmetic Aspects
(T.~W.~M\"uller, ed.), London Math. Soc. Lecture Note Ser.,
vol.~311, Cambridge Univ. Press, Cambridge, 2004, 487--541.

\bibitem{Remy1}
B.~R\'emy (with an appendix by P.~Bonvin), {\it Topological
simplicity, commensurator super-rigidity and non-linearities
of Kac--Moody groups}, Geom. Funct. Anal. {\bf 14} (2004),
810--852.

\bibitem{RR}
B.~R\'emy, M.~Ronan, {\it Topological groups of Kac--Moody type,
right-angled twinnings and their lattices}, Comment. Math. Helv.
{\bf 81} (2006), 191--219.

\bibitem{Rh}
A.~H.~Rhemtulla, {\it A problem of bounded expressibility in free
products}, Proc. Cambridge Philos. Soc. {\bf 64} (1968), 573--584.

\bibitem{Rou2}
G.~Rousseau, {\it Almost split $K$-forms of Kac--Moody algebras},
Infinite-dimensional Lie Algebras and Groups (Luminy-Marseille,
1988) (V.~G.~Kac, ed.), Adv. Ser. Math. Phys., vol.~7, World Sci.
Publ., Teaneck, NJ, 1989, 70--85.

\bibitem{Rou}
G.~Rousseau, {\it On forms of Kac--Moody algebras}, Algebraic Groups
and Their Generalizations: Quantum and Infinite-dimensional Methods
(University Park, PA, 1991) (W.~J.~Haboush, B.~J. Parshall, eds.),
Proc. Sympos. Pure Math., vol.~56, Part 2, Amer. Math. Soc., Providence,
RI, 1994, 393--399.

\bibitem{Rou1}
G.~Rousseau, {\it Groupes de Kac--Moody d\'eploy\'es sur un corps
local, II. Masures ordonn\'ees,} preprint, \url{arXiv:1009.0138v2}
[math.GR], 2012, 61 pp.

\bibitem{SW}
J.~Saxl, J.~S.~Wilson, {\it A note on powers in simple groups},
Math. Proc. Cambridge Philos. Soc. {\bf 122} (1997), 91--94.

\bibitem{Se}
D.~Segal, {\it Words: Notes on Verbal Width in Groups}, London Math.
Soc. Lecture Note Ser., vol.~361, Cambridge Univ. Press, Cambridge,
2009, xii+121 pp.

\bibitem{Se1}
Z.~Sela, {\it Diophantine geometry over groups I: Makanin-Razborov
diagrams}, Inst. Hautes \'Etudes Sci. Publ. Math. {\bf 93} (2001),
31--105.

\bibitem{Se2}
Z.~Sela, {\it Diophantine geometry over groups II: Completions,
closures and formal solutions}, Israel J. Math. {\bf 134} (2003),
173--254.

\bibitem{Se3}
Z.~Sela, {\it Diophantine geometry over groups III: Rigid and solid
solutions}, Israel J. Math. {\bf 147} (2005), 1--73.

\bibitem{Se4}
Z.~Sela, {\it Diophantine geometry over groups IV: An iterative
procedure for validation of a sentence}, Israel J. Math. {\bf 143}
(2004), 1--130.

\bibitem{Se5}
Z.~Sela, {\it Diophantine geometry over groups V: Quantifier
elimination I, II}, Israel J. Math. {\bf 150} (2005), 1--197; Geom.
Funct. Anal. {\bf 16} (2006), 537--706.

\bibitem{Se6}
Z.~Sela, {\it Diophantine geometry over groups VI: The elementary
theory of a free group}, Geom. Funct. Anal. {\bf 16} (2006),
707--730.

\bibitem{Se7}
Z.~Sela, {\it Diophantine geometry over groups VII: The elementary
theory of a hyperbolic group}, Proc. Lond. Math. Soc. {\bf 99}
(2009), 217--273.

\bibitem{Se8}
Z.~Sela, {\it Diophantine geometry over groups VIII: Stability},
Ann. Math. {\bf 177} (2013), 787--868.

\bibitem{Se9}
Z.~Sela, {\it Diophantine geometry over groups X: The elementary
theory of free products of groups}, preprint,
\url{arXiv:1012.0044v1} [math.GR], 2010, 143 pp.

\bibitem{Ser}
J-P. Serre, {\it How to use finite fields for problems concerning
infinite fields}, Arithmetic, Geometry, Cryptography and Coding
Theory (G.~Lachaud, C.~Ritzenthaler, M.~A.~Tsfasman, eds.),
Contemp. Math., vol.~487, Amer. Math. Soc., Providence, RI, 2009,
183--193.

\bibitem{Sha1}
I.~R.~Shafarevich, {\it On some infinite-dimensional groups}, Rend.
Mat. e Appl. {\bf 25} (1966), no.~1--2, 208--212.

\bibitem{Sha2}
I.~R.~Shafarevich, {\it On some infinite-dimensional groups. II}
(Russian), Izv. Akad. Nauk SSSR Ser. Mat. {\bf 45} (1981), 214--226,
240; English transl. in: Math. USSR-Izv. {\bf 18} (1982), 185--194.

\bibitem{Sh1}
A.~Shalev, {\it Commutators, words, conjugacy classes and character
methods}, Turkish J. Math. {\bf 31} (2007), suppl., 131--148.

\bibitem{Sh2}
A.~Shalev, {\it Word maps, conjugacy classes, and a non-commutative
Waring-type theorem}, Ann. Math.  {\bf 170} (2009), 1383--1416.

\bibitem{Sh3}
A.~Shalev, {\it  Some results and problems in the theory of word
maps}, Erd\"os Centennial (L.~Lov\'asz, I.~Z.~Ruzsa, V.~T.~S\'os,
eds.), Bolyai Soc. Math. Stud., vol.~25, J\'anos Bolyai Math. Soc.,
Budapest, 2013, 611--649.


\bibitem{Sta}
I.~Stampfli, {\it On the topologies on ind-varieties and related
irreducibility questions}, J. Algebra {\bf 372} (2012), 531--541.

\bibitem{Stb}
R.~Steinberg, {\it On power maps in algebraic groups}, Math. Res.
Lett. {\bf 10} (2003), 621--624.

\bibitem{Ste}
A.~Stepanov, {\it Structure of Chevalley groups over rings via
universal localization}, preprint, \url{arXiv:1303.6082v3}
[math.RA], 2013, 18 pp., to appear in J. $K$-Theory.

\bibitem{SV}
A.~Stepanov, N.~Vavilov, {\it On the length of commutators in
Chevalley groups}, Israel J. Math. {\bf 185} (2011), 253--276.

\bibitem{Thom}
A.~Thom, {\it Convergent sequences in discrete groups}, Canad. Math.
Bull. {\bf 56} (2013), 424--433.

\bibitem{Ti}
J.~Tits, {\it Uniqueness and presentation of Kac--Moody groups over
fields}, J. Algebra {\bf 105} (1987), 542--573.

\bibitem{To}
H.~T\^oyama, {\it On commutators of matrices}, K\=odai Math. Sem.
Rep. {\bf 1} (1949), no.~5--6, 1--2.

\bibitem{Z}
M.~Zorn, {\it Nilpotency of finite groups}, Bull. Amer. Math. Soc.
{\bf 42} (1936), 485--486.




\end{thebibliography}
\end{document}